	\pgfplotsset{compat=1.12} 
	\newcommand{\tikzc}[1]{\begin{center}\begin{tikzpicture}#1\end{tikzpicture}\end{center}}
	\newcommand\blue[1]{{\textcolor{blue}{#1}}}
	\definecolor{orange}{RGB}{250, 140, 0}
	\definecolor{turq}{RGB}{0, 160, 160}
	\newtheorem{thm}{Theorem}[section]
	\newtheorem{lem}[thm]{Lemma}
	\newtheorem{prop}[thm]{Proposition}
	\theoremstyle{definition}
		\newtheorem{defn}[thm]{Definition}
		\newtheorem{ex}[thm]{Example}
	\newtheoremstyle{TheoremNum}
        {\topsep}{\topsep}              
        {\itshape}                      
        {}                              
        {\bfseries}                     
        {.}                             
        { }                             
        {\thmname{#1}\thmnote{ \bfseries #3}}
    \theoremstyle{TheoremNum}
	\newcommand{\rmk}{\noindent {\bf Remark.\quad}}
	\newcommand{\df}[1]{{\bf\emph{#1}}}		
\newcommand{\eq}[1]{\begin{align*}#1\end{align*}}
	\newcommand{\eqn}[1]{\begin{align}#1\end{align}}  
\newcommand{\ds}{\displaystyle}			
\renewcommand{\emptyset}{\varnothing}	
	\renewcommand{\epsilon}{\varepsilon}	
	\renewcommand{\phi}{\varphi}			
\renewcommand{\tilde}[1]{\widetilde{#1}} 
\newcommand{\rr}{\ensuremath{\mathbb{R}}}
\DeclareMathOperator{\Ker}{ker}			
\DeclareMathOperator{\Img}{im}			
\DeclareMathOperator{\Span}{span}		
\newcommand{\kk}{k}
\newcommand{\rrpp}{\rr_{>0}}
\newcommand{\RR}{\ensuremath{\rightleftharpoons}}
\newcommand{\FR}{\ensuremath{\rightarrow}}
\newcommand{\vv}[1]{{\boldsymbol{#1}}}
\newcommand{\cf}[1]{\chemfig{#1}}
\title{An efficient characterization of complex-balanced, detailed-balanced, and weakly reversible systems}
\author{
   Gheorghe Craciun\thanks{Department of Mathematics and Department of Biomolecular Chemistry, University of Wisconsin-Madison, Madison, WI, 53706 (craciun@math.wisc.edu).} 
\and
    Jiaxin Jin\thanks{Department of Mathematics, University of Wisconsin-Madison, Madison, WI, 53706 (jjin43@wisc.edu).
    } 
\and
    Polly Y. Yu\thanks{Department of Mathematics, University of Wisconsin-Madison, Madison, WI, 53706 (pollyyu@math.wisc.edu).} 
}
\begin{document}
\maketitle

\begin{abstract}
Very often, models in biology, chemistry, physics, and engineering are systems of polynomial or power-law ordinary differential equations, arising from a reaction network. Such dynamical systems can be generated by many different reaction networks. On the other hand, networks with special properties (such as reversibility or weak reversibility) are known or conjectured to give rise to dynamical systems that have special properties: existence of positive steady states, persistence, permanence, and (for well-chosen parameters) complex balancing or detailed balancing. These last two are related to thermodynamic equilibrium, and therefore the positive steady states are unique and stable. We describe a computationally efficient characterization of polynomial or power-law dynamical systems that can be obtained as complex-balanced, detailed-balanced, weakly reversible, and reversible mass-action systems.
\end{abstract}

\section{Introduction}
\label{sec:Intro}

Many mathematical models in biology, chemistry, physics, and engineering are obtained from nonlinear interactions between several species or populations, such as (bio)chemical reactions in a cell or a chemical reactor, population dynamics in an ecosystem, or kinetic interactions in a gas or solution~\cite{GunaNts, HJ72, CasianGAC1, TDS, ctf06, CraciunBinh, Banaji_2013, Fein72,  FeinLectNts, Feinberg_1987, Savageau_Voit, Sontag1, angeli_tutorial}. Very often, these models are generated by a graph of interactions according to specific kinetic rules; \emph{mass-action kinetics} for reaction network models is one such example~\cite{RevCY}. 

If the graph underlying the mass-action system in a given reaction network has some special properties, then the associated dynamical system is known (or conjectured) to have certain dynamical properties. For example,  dynamical systems generated by \emph{reversible} reaction networks are known to have at least one positive steady state within each linear invariant subspace~\cite{Boros_2018}. Moreover, these models are known to be persistent and permanent if the number of species is small and are conjectured to have these properties for any number of species~\cite{CasianGAC2, CasianGAC1}. The same situation occurs for \emph{weakly reversible} reaction networks, i.e., for networks where each reaction is part of a cycle (see Figure~\ref{fig:excrnrev}(b) and (c) for examples of such networks). For descriptions of other important classes of networks, see~\cite{ABCJ_2018}.

Moreover, after some restrictions on the parameter values, weakly reversible networks give rise to \emph{complex-balanced systems}, which are known to have a unique locally stable steady state within each linear invariant subspace. This steady state is known to be globally stable under some additional assumptions~\cite{CasianGAC1, DaveGAC, geomGAC, CasianGAC2} and is actually conjectured to be globally stable even without these assumptions~\cite{CasianGAC1, CraciunGAC}. If a reaction network is a complex-balanced system under mass-action kinetics, then other relevant models, ranging from continuous-time Markov chain models~\cite{Anderson_Craciun_Kurtz} to reaction-diffusion models~\cite{Desvillettes_2017, method_of_lines} and delay differential equation models~\cite{delayCB}, are also stable in some sense.

It turns out that the same dynamical system can be generated by a multitude of reaction networks~\cite{cpIdentifiability, Sze2011, WRAlgDense, WRAlgSparse,HarsToth1987}. Therefore, if a system is generated by a network that does \emph{not} enjoy a specific graphical property (e.g., not weakly reversible), we can ask whether the same system \emph{may} be generated by a weakly reversible network. Others have asked this question before and formulated algorithms for a given number of complexes~\cite{PolylTimeAlgWR, Sze2011, WRAlgDense, WRAlgSparse} and applied the results to designing control systems~\cite{Sontag1, DEMASControl}. In order to determine whether a given system is generated by a weakly reversible or complex-balanced system, one would have to determine if it can be done using $n$ number of complexes for all $n \geq 1$. 

In this paper we develop a theory of dynamical equivalence between mass-action systems (or more generally, polynomial or power-law dynamical systems) and weakly reversible and complex-balanced systems. Our results allow us to reformulate this dynamical equivalence problem as a linear feasibility problem whose dimension depends only on the size of the original system.  

In order to describe our main results, we need to introduce some definitions and notations (these notions will be described in further detail in Section 2). For our purposes here, a \emph{reaction network} is an oriented graph $G = (V_G,E_G)$ with vertex set $V_G$ and edge set $E_G$ such that $V_G \subseteq \rr^n$. If $\vv y$, $\vv y'\in V_G$ and $(\vv y, \vv y')$ is an edge in $E_G \subseteq V_G\times V_G$, then we write $\vv y\to \vv y'\in G$. With these notations, a dynamical system generated by $G$ (according to mass-action kinetics) is a system of ordinary differential equations on $\rrpp^n$ given by 
	\eqn{
	\label{eq:eqMAS_1}
		\frac{d\vv x}{dt} 
		\,\,=\!\!
		\sum_{\vv y\FR \vv y' \in G} \kk_{\vv y\FR \vv y'} \vv x^{\vv y} (\vv y' - \vv y),
	}
where $\vv x \in \rrpp^n$,  $\vv x^{\vv y} = x_1^{y_1} x_2^{y_2} \cdots x_n^{y_n}$, and $\kk_{\vv y\FR \vv y'} > 0$ for all $\vv y\FR \vv y' \in G$. We will denote the dynamical system~(\ref{eq:eqMAS_1}) by $G_{\vv \kk}$, where $\vv \kk$ is the vector of parameters $\kk_{\vv y\FR \vv y'}$ for all $\vv y\FR \vv y' \in G$.

\bigskip

One of our main results is the following.
\newtheorem*{thm*}{Theorem}

\begin{thm*}
    A mass-action system $G_{\vv \kk}$ is dynamically equivalent to some \emph{complex-balanced} mass-action system if and only if it is dynamically equivalent to a complex-balanced mass-action system $G'_{\vv \kk'}$ that only uses the vertices of $G$, i.e., with $V_{G'} \subseteq V_G$.
\end{thm*}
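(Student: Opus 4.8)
The backward (``if'') direction is immediate, since a complex-balanced system $G'_{\vv \kk'}$ with $V_{G'} \subseteq V_G$ is in particular \emph{some} complex-balanced system dynamically equivalent to $G_{\vv \kk}$. So the content is in the forward direction, which I plan to prove by an explicit, terminating ``vertex-elimination'' procedure.

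Suppose $G_{\vv \kk}$ is dynamically equivalent to a complex-balanced mass-action system $\tilde G_{\tilde{\vv \kk}}$, and let $\vv x^* \in \rrpp^n$ be a complex-balancing steady state. It is convenient to pass to the \emph{fluxes} $f_{\vv y \to \vv y'} := \tilde \kk_{\vv y \to \vv y'}(\vv x^*)^{\vv y} > 0$; complex balancing then says precisely that $\vv f$ is a strictly positive \emph{circulation} on $\tilde G$, i.e.\ at every vertex $\vv z$ the incoming flux $F(\vv z) := \sum_{\vv y \to \vv z} f_{\vv y \to \vv z}$ equals the outgoing flux $\sum_{\vv z \to \vv y'} f_{\vv z \to \vv y'}$, and the generated ODE is $\dot{\vv x} = \sum_{\vv y \to \vv y' \in \tilde G} f_{\vv y \to \vv y'}\,(\vv x/\vv x^*)^{\vv y}(\vv y' - \vv y)$.

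The key observation is that any ``extra'' vertex $\vv z \in V_{\tilde G} \setminus V_G$ can be short-circuited. Since the monomials $\vv x^{\vv y}$ are linearly independent on $\rrpp^n$ and $\vv z$ is not a vertex of $G$, dynamical equivalence forces the net reaction vector at $\vv z$ to vanish: $\sum_{\vv z \to \vv y'} f_{\vv z \to \vv y'}(\vv y' - \vv z) = \vv 0$. (The circulation property also shows $\vv z$ is neither a pure source nor a pure sink, so either $\vv z$ is isolated — then simply delete it — or it has both incoming and outgoing edges.) Form a new network by deleting $\vv z$ together with its incident edges and, for each incoming edge $\vv y \to \vv z$ and outgoing edge $\vv z \to \vv y'$, adding an edge $\vv y \to \vv y'$ carrying flux $f_{\vv y \to \vv z}\,f_{\vv z \to \vv y'}/F(\vv z)$ (adding fluxes on parallel edges, and discarding any self-loop $\vv y \to \vv y$, which contributes nothing to the ODE or to balancing). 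This preserves complex balancing at $\vv x^*$: at each out-neighbor $\vv y'$ of $\vv z$ the lost incoming flux $f_{\vv z \to \vv y'}$ is exactly restored by $\sum_{\vv y \to \vv z} f_{\vv y \to \vv z}\,f_{\vv z \to \vv y'}/F(\vv z) = f_{\vv z \to \vv y'}$, and symmetrically the outgoing flux at each in-neighbor $\vv y$ of $\vv z$ is unchanged, so $\vv f'$ is again a positive circulation. It also preserves the ODE: the outgoing edges of $\vv z$ contributed $(\vv x/\vv x^*)^{\vv z}\sum_{\vv z \to \vv y'} f_{\vv z \to \vv y'}(\vv y' - \vv z) = \vv 0$, while for a fixed in-neighbor $\vv y$ the new edges out of $\vv y$ contribute
\[
\frac{f_{\vv y \to \vv z}}{F(\vv z)}\,(\vv x/\vv x^*)^{\vv y}\sum_{\vv z \to \vv y'} f_{\vv z \to \vv y'}(\vv y' - \vv y)
= \frac{f_{\vv y \to \vv z}}{F(\vv z)}\,(\vv x/\vv x^*)^{\vv y}\,F(\vv z)\,(\vv z - \vv y)
= f_{\vv y \to \vv z}\,(\vv x/\vv x^*)^{\vv y}(\vv z - \vv y),
\]
which is exactly what the deleted edge $\vv y \to \vv z$ contributed; the middle equality uses $\sum_{\vv z \to \vv y'} f_{\vv z \to \vv y'}(\vv y' - \vv z) = \vv 0$ and $\sum_{\vv z \to \vv y'} f_{\vv z \to \vv y'} = F(\vv z)$. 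Converting fluxes back via $\kk'_{\vv y \to \vv y'} := f'_{\vv y \to \vv y'}/(\vv x^*)^{\vv y} > 0$ gives a genuine mass-action system.

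Because the short-circuiting step only adds edges between already-present vertices (the in- and out-neighbors of $\vv z$), it strictly decreases $\lvert V_{\tilde G} \setminus V_G\rvert$ and never creates a new vertex outside $V_G$. Iterating finitely many times yields a complex-balanced mass-action system $G'_{\vv \kk'}$ with $V_{G'} \subseteq V_G$ that generates the same ODE as $\tilde G_{\tilde{\vv \kk}}$, hence is dynamically equivalent to $G_{\vv \kk}$. I expect the only delicate point to be the ODE-invariance of the short-circuiting step, which hinges on the vanishing net reaction vector at the eliminated vertex; the rest is bookkeeping of parallel edges, self-loops, and vertices that may become isolated along the way, none of which affects the ODE or the complex-balancing condition. (The construction is fully explicit, which is what makes it useful for the efficient characterization that motivates the paper.)
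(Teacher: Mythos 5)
Your proposal is correct and takes essentially the same route as the paper: pass to fluxes at the complex-balancing steady state, note that any vertex outside $V_G$ is a ``virtual source'' (zero net reaction vector, forced by linear independence of the monomials), and short-circuit it by redistributing $f_{\vv y \to \vv z} f_{\vv z \to \vv y'}/F(\vv z)$ onto the edges $\vv y \to \vv y'$ --- exactly the formula in the paper's Theorem~\ref{thm:NoNewNodesnD} --- then iterate until no extra vertices remain. The only cosmetic difference is that you eliminate a vertex in one direct computation, whereas the paper inducts on its incoming edges (Lemma~\ref{lem:nDbasecase} combined with Theorem~\ref{thm:NoNewNodesnD}); your explicit handling of self-loops and isolated vertices is fine.
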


This theorem is useful not only for finding complex-balanced realizations of mass-action systems but also because for the first time, it gives us {\em a computationally feasible way to decide if such realizations exist}, as we only need to check if they exist for graphs $G'$ that have $V_{G'} \subseteq V_{G}$. 

\bigskip

We will see in Section~\ref{sec:NoNewNodes} that we can restrict the set $V_{G'}$ even more: without loss of generality we can  assume that it is contained in the set of ``source vertices" of $G$. We have also obtained similar results for other important classes of mass-action systems: detailed-balanced, weakly reversible and reversible systems. 
Moreover, our results are shown for \emph{flux systems}, which allows for other types of kinetics beside mass-action kinetics (Section~\ref{sec:FluxesCRN}).

Reaction networks and mass-action systems, along with all other relevant terms, are defined in Section~\ref{sec:crn}. We view a reaction network as a directed graph embedded in Euclidean space. In Section~\ref{sec:FluxesCRN}, we define fluxes on a reaction network and relate them back to mass-action systems. Section~\ref{sec:NoNewNodes} contains our main results for complex-balanced realizations, weakly reversible realizations, detailed-balanced realizations and reversible realizations. We make a brief comment on the implication of our results on the network's \emph{deficiency}. Finally, we present the relevant feasibility problems in Section~\ref{sec:numerical}.

\section{Reaction Networks and Mass-Action Systems}
\label{sec:crn}

Chemical reaction networks appear at the intersection of biology, biochemistry, chemistry, engineering, and mathematics. Different notations are used in the literature; here we explain the notations used throughout this paper. Introductions to chemical reaction network theory can be found in~\cite{RevCY, GunaNts,FeinLectNts}. 

\begin{defn}
\label{def:crn}
	A {\df{reaction network}} (or simply a \df{network}) is a directed graph $G = (V_G,E_G)$ embedded in Euclidean space, with no self-loops, i.e., $V_G \subseteq \rr^n$ and $E_G \subseteq V_G \times V_G$ and $(\vv y,\vv y) \not\in E_G$ for any $\vv y \in V_G$. 
\end{defn}

\noindent 
When there is no ambiguity, we simply write $G = (V,E)$.\\

\noindent 
\rmk 
Vertices are points in $\rr^n$, so an edge $e \in E$ can be regarded as a bona fide vector in $\rr^n$. We denote an edge $e = (\vv y, \vv y')$ as $\vv y \FR \vv y'$, which is associated to a {\df{reaction vector}} $\vv y' - \vv y \in \rr^n$. We also write $\vv y \to \vv y' \in G$ instead of $\vv y \to \vv y' \in E$.  \\

The dimension $n$ of the ambient Euclidean space is the number of chemical species involved in the reaction network $G$. An edge in the set $E$ is called a \df{reaction}. A vertex in $V$ is also known as a \df{reaction complex}. 
The \df{source vertex} of a reaction $\vv y \to \vv y'$ is the vertex $\vv y$, while $\vv y'$ is the \df{product vertex}. Let $V_s \subseteq V$ denote the \df{set of source vertices}, i.e., the set of vertices that is the source of some reaction. 

The vector space spanned by the reaction vectors is the \df{stoichiometric subspace} $S = \Span_\rr \{ \vv y' - \vv y \colon \vv y \FR \vv y' \in G\}$. For any positive vector $\vv x_0 \in \rrpp^n$, the affine polytope $(\vv x_0 + S)_> = (\vv x_0 + S) \cap \rrpp$ is known as the \df{stoichiometric compatibility class} of $\vv x_0$. A reaction network $G$ is \df{reversible} if $\vv y' \FR \vv y \in G$ whenever $\vv y \FR \vv y' \in G$; for simplicity, we denote such a pair of reactions by $\vv y \RR \vv y'$. It is \df{weakly reversible} if every connected component of $G$ is strongly connected, i.e., every reaction $\vv y \FR \vv y' \in G$ is part of an oriented cycle. 

\begin{ex}
\label{ex:crn}
	Figure~\ref{fig:excrn} shows a reaction network $G$ in $\rr^2$ with $6$ vertices and $3$ reactions. The reactions are
	\eq{
		\vv y_1 \FR \vv z_1 
		&= \begin{pmatrix} 1 \\ 0 \end{pmatrix}
			\FR \begin{pmatrix} 2 \\ 0 \end{pmatrix}, 
		\qquad
		\vv y_2 \FR \vv z_2 
		= \begin{pmatrix} 1 \\ 1 \end{pmatrix}
			\FR \begin{pmatrix} 0 \\ 2 \end{pmatrix},
		\qquad
		\vv y_3 \FR \vv z_3 
		= \begin{pmatrix} 0 \\ 1 \end{pmatrix}
			\FR \begin{pmatrix} 0 \\ 0 \end{pmatrix}.	
	}
    The stoichiometric subspace, which is the linear span of the reaction vectors, is $\rr^2$. In particular, any stoichiometric compatibility class is all of $\rrpp^2$. The reaction network $G$ is neither reversible nor weakly reversible.
\end{ex} 
	\begin{figure}[h!]
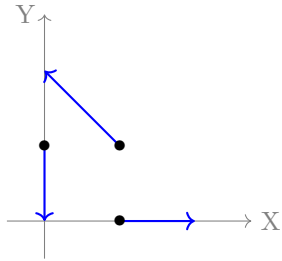

		\tikzc{
			\draw [->, gray] (-0.5,0)--(2.75,0) node [right] {X};
			\draw [->, gray] (0,-0.5)--(0,2.75) node [left] {Y};
			\draw [->, blue, thick] (1,0)--(2,0);
			\draw [->, blue, thick] (1,1)--(0,2);
			\draw [->, blue, thick] (0,1)--(0,0);
			\node (x) at (1,0) {$\bullet$};
			\node (xy) at (1,1) {$\bullet$};
			\node (y) at (0,1) {$\bullet$};
		}
		\caption{A reaction network $G$ in $\rr^2$ consisting of 3 reactions and 6 vertices. Under mass-action kinetics, this network gives rise to the classical Lotka--Volterra model for population dynamics.}
		\label{fig:excrn}
	\end{figure}
	
\begin{ex}
\label{ex:crnrev}
    Three more examples of reaction networks are presented in Figure~\ref{fig:excrnrev}. The reaction networks (a) $G$, (b) $G'$, and (c) $G^*$ share the vertices
	\eq{
		\vv y_1 = \begin{pmatrix} 0 \\ 0 \end{pmatrix}, \quad 
		\vv y_2 = \begin{pmatrix} 0 \\ 2 \end{pmatrix}, \quad 
		\vv y_3 = \begin{pmatrix} 3 \\ 2 \end{pmatrix}, 
		\quad \text{and} \quad 
		\vv y_4 = \begin{pmatrix} 3 \\ 0 \end{pmatrix}.
	}
    The reaction networks  $G$, $G^*$ have two additional vertices
    \eq{ 
        \vv y_5 = \begin{pmatrix}  1\\ 1 \end{pmatrix} 
        \quad \text{ and } \quad  
        \vv y_6 = \begin{pmatrix}  2\\ 1 \end{pmatrix}.
    }
    
    The set of four reactions of $G$ is $E_G = \{\vv y_1 \FR \vv y_5,  \, \vv y_2 \FR \vv y_5, \, \vv y_3 \FR \vv y_6 , \, \vv y_4 \FR \vv y_6\}$.  
    The set of reactions of $G'$ is $E_{G'} = \{\vv y_1 \RR \vv y_2, \, \vv y_2 \RR \vv y_3, \, \vv y_3 \RR \vv y_4, \, \vv y_4 \RR \vv y_1, \, \vv y_1 \RR \vv y_3, \, \vv y_2 \RR \vv y_4\}$. 
    The set of reactions of $G^*$ is $E_{G^*} = \{\vv y_1 \RR \vv y_5 \RR \vv y_2, \, \vv y_3 \RR \vv y_6 \RR \vv y_4, \,  \vv y_5 \RR \vv y_6, \,  \vv y_5 \FR \vv y_3 , \, \vv y_5 \FR \vv y_4\}$. 
    The networks $G'$ and $G^*$ are weakly reversible, and $G'$ is also reversible. The stoichiometric subspace is $S = \rr^2$ for all three networks. 

	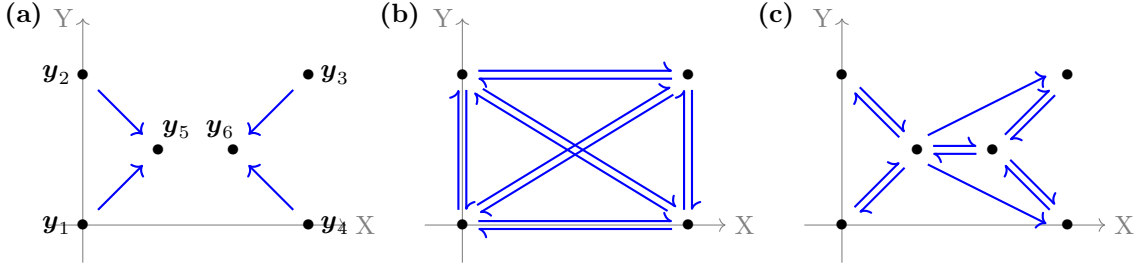
\begin{figure}[h!]
	\centering
	\vspace{0.15cm}
	\hspace{0.3cm}
		\begin{tikzpicture}
			\draw [->, gray] (-0.5,0)--(3.5,0) node [right] {X};
			\draw [->, gray] (0,-0.5)--(0,2.75) node [left] {Y};
			\node (1) at (0,0) {$\bullet$};
			\node (2) at (0,2) {$\bullet$};
			\node (3) at (3,2) {$\bullet$};
			\node (4) at (3,0) {$\bullet$};
			\node (5) at (1,1) {$\bullet$};
			\node (6) at (2,1) {$\bullet$};
			\draw [->, blue, thick] (1)--(5);
			\draw [->, blue, thick] (2)--(5);
			\draw [->, blue, thick] (3)--(6);
			\draw [->, blue, thick] (4)--(6);
			\node[left=-5pt of 1]    {$\vv y_1$};
			\node[left=-5pt of 2]    {$\vv y_2$};
			\node[right=-5pt of 3]    {$\vv y_3$};
			\node[right=-5pt of 4]    {$\vv y_4$};
			\node[above right =-5pt and -8pt of 5]    {$\vv y_5$};
			\node[above left =-5pt and -10pt  of 6]    {$\vv y_6$};
		\end{tikzpicture}
		\hspace{0.3cm}
		\begin{tikzpicture}
			\draw [->, gray] (-0.5,0)--(3.5,0) node [right] {X};
			\draw [->, gray] (0,-0.5)--(0,2.75) node [left] {Y};
			\node (1) at (0,0) {$\bullet$};
			\node (2) at (0,2) {$\bullet$};
			\node (3) at (3,2) {$\bullet$};
			\node (4) at (3,0) {$\bullet$};
            \draw [-{>[harpoon]}, blue, thick, transform canvas={xshift=0pt, yshift=1.5pt}] (2)--(3);
            \draw [-{>[harpoon]}, blue, thick, transform canvas={xshift=0pt, yshift=-1.5pt}] (3)--(2);
            \draw [-{>[harpoon]}, blue, thick, transform canvas={xshift=0pt, yshift=1.5pt}] (1)--(4);
            \draw [-{>[harpoon]}, blue, thick, transform canvas={xshift=0pt, yshift=-1.5pt}] (4)--(1);
            \draw [-{>[harpoon]}, blue, thick, transform canvas={yshift=0pt, xshift=1.5pt}] (2)--(1);
            \draw [-{>[harpoon]}, blue, thick, transform canvas={yshift=0pt, xshift=-1.5pt}] (1)--(2);
            \draw [-{>[harpoon]}, blue, thick, transform canvas={yshift=0pt, xshift=1.5pt}] (3)--(4);
            \draw [-{>[harpoon]}, blue, thick, transform canvas={yshift=0pt, xshift=-1.5pt}] (4)--(3);
            \draw [-{>[harpoon]}, blue, thick, transform canvas={xshift=-1pt, yshift=1pt}] (0.23,0.23)--(2.8,1.8);
            \draw [-{>[harpoon]}, blue, thick, transform canvas={xshift=1pt, yshift=-1pt}] (2.77,1.77)--(0.2,0.2);
            \draw [-{>[harpoon]}, blue, thick, transform canvas={xshift=1.2pt, yshift=1.2pt}] (0.23, 1.77)--(2.8, 0.2);
            \draw [-{>[harpoon]}, blue, thick, transform canvas={xshift=-1.2pt, yshift=-1.2pt}] (2.77,0.23)--(0.2, 1.8);
		\end{tikzpicture}
		\hspace{0.3cm}
		\begin{tikzpicture}
			\draw [->, gray] (-0.5,0)--(3.5,0) node [right] {X};
			\draw [->, gray] (0,-0.5)--(0,2.75) node [left] {Y};
			\node (1) at (0,0) {$\bullet$};
			\node (2) at (0,2) {$\bullet$};
			\node (3) at (3,2) {$\bullet$};
			\node (4) at (3,0) {$\bullet$};
			\node (5) at (1,1) {$\bullet$};
			\node (6) at (2,1) {$\bullet$};
			\draw [->, blue, thick, transform canvas={xshift=-2pt, yshift=2.5pt}]  (5) --(3);
			\draw [->, blue, thick, transform canvas={xshift=-2pt, yshift=-2.5pt}] (5)--(4);
			\draw [-{>[harpoon]}, blue, thick, transform canvas={xshift=0pt, yshift=1.5pt}] (5)--(6);
            \draw [-{>[harpoon]}, blue, thick, transform canvas={xshift=0pt, yshift=-1.5pt}] (6)--(5);
            \draw [-{>[harpoon]}, blue, thick, transform canvas={xshift=-1pt, yshift=1pt}] (1)--(5);
            \draw [-{>[harpoon]}, blue, thick, transform canvas={xshift=1pt, yshift=-1pt}] (5)--(1);
            \draw [-{>[harpoon]}, blue, thick, transform canvas={xshift=1pt, yshift=1pt}] (2)--(5);
            \draw [-{>[harpoon]}, blue, thick, transform canvas={xshift=-1pt, yshift=-1pt}] (5)--(2);
            \draw [-{>[harpoon]}, blue, thick, transform canvas={xshift=-1pt, yshift=0pt}] (6)--(3);
            \draw [-{>[harpoon]}, blue, thick, transform canvas={xshift=1pt, yshift=-2pt}] (3)--(6);
            \draw [-{>[harpoon]}, blue, thick, transform canvas={xshift=1pt, yshift=2pt}] (6)--(4);
            \draw [-{>[harpoon]}, blue, thick, transform canvas={xshift=-1pt, yshift=0pt}] (4)--(6);
		\end{tikzpicture}
		\begin{tikzpicture}[overlay]
			\node at (-15,3.3) {{\bf (a)}};
			\node at (-10,3.3) {{\bf (b)}};
			\node at (-5,3.3) {{\bf (c)}};
		\end{tikzpicture}
		\caption{Examples of reaction networks 
		(a) $G$, (b) $G'$, and (c) $G^*$, with labels of vertices shown in (a). The dynamical systems generated by the network (a) can also be generated by (b) or (c) for well-chosen rate constants. Note that (b) and (c) are weakly reversible, and (b) is also reversible.}
		\label{fig:excrnrev}
	\end{figure}
\end{ex}

A reaction network $G$ is associated to a dynamical system, by assuming that each reaction $\vv y \FR \vv y'$ proceeds according to a rate function $\nu_{\vv y\FR \vv y'}(\vv x)$, where $\vv x \in \rrpp^n$ is the vector of \emph{concentrations} of the chemical species in the system. One of the most extensively studied kinetic systems is \emph{mass-action kinetics}, where $\nu_{\vv y\FR \vv y'}(\vv x)$ is a monomial whose exponent vector is $\vv y$. 

\begin{defn}
\label{def:MAS}
	Let $G =(V,E)$ be a reaction network, and let $\vv \kk = (\kk_{\vv y\FR \vv y'})_{\vv y\FR \vv y' \in G} \in \rrpp^{E}$ be a vector of \emph{rate constants}. We call the weighted directed graph $G_{\vv \kk}$ a {\df{mass-action system}}, whose {\df{associated dynamical system}} is the system on $\rrpp^n$
	\eqn{
	\label{eq:eqMAS}
		\frac{d\vv x}{dt} 
		\,\,=\!\!
		\sum_{\vv y\FR \vv y' \in G} \kk_{\vv y\FR \vv y'} \vv x^{\vv y} (\vv y' - \vv y),
	}
    where $\vv x^{\vv y} = x_1^{y_1} x_2^{y_2} \cdots x_n^{y_n}$. By convention, $\vv x^{\vv 0} = 1$. 
\end{defn}

It is convenient to refer to $\kk_{\vv y \to \vv y'}$ even when $\vv y \to \vv y' \not\in G$, in which case we mean $\kk_{\vv y \to \vv y'} = 0$. We adopt the convention that the empty sum is $\vv 0$, i.e., $\sum_{\vv y \FR\vv  y' \in \emptyset} \kk_{\vv y \to \vv y'} (\vv y' - \vv y) = \vv 0$.\\

\begin{ex}
We revisit Example~\ref{ex:crn} under the assumption of mass-action kinetics. The dynamical system associated to this reaction network $G = (V,E)$ for an arbitrary vector of rate constants $\vv \kk = (k_j)_{\vv y_j \FR \vv z_j \in G} \in \rrpp^{E}$ is 
	\eq{
		\frac{d\vv x}{dt} 
		&= \kk_1 x \begin{pmatrix} 1 \\ 0 \end{pmatrix}
		+ \kk_2 xy \begin{pmatrix} -1 \\ 1 \end{pmatrix}
		+ \kk_3 y \begin{pmatrix} 0 \\ -1 \end{pmatrix} 
		= 
			\begin{pmatrix}
				\kk_1 x\hphantom{y} - \kk_2 xy \\
				\kk_2 xy - \kk_3 y \hphantom{x}
			\end{pmatrix}.
	}
This is the Lotka--Volterra population dynamics model.\\
\end{ex}

Given a mass-action system $G_{\vv \kk}$, (\ref{eq:eqMAS}) uniquely defines its associated dynamical system; however, many different reaction networks can give rise to the same dynamical system under mass-action kinetics. It has been known for a long time that if a reaction network has some special properties (e.g., reversible, weakly reversible, deficiency zero), then the mass-action system is known to have certain dynamical properties (e.g., existence of positive steady state, local and global stability). Therefore, given a mass-action system, we are interested in networks with richer structural properties that give rise to same dynamical systems. If two mass-action systems give rise to the same associated dynamical systems, we say they are \emph{dynamically equivalent}~\cite{Sze2011, WRAlgDense, WRAlgSparse, cpIdentifiability}.

\begin{defn}
\label{def:DE}
	Two mass-action systems $G_{\vv \kk}$ and $G'_{\vv \kk'}$ are {\df{dynamically equivalent}} if 
	\eqn{
	\label{eq:eqDE}
		\sum_{\vv y_1\FR \vv y_2 \in G} \kk_{\vv y_1 \FR \vv y_2} \vv x^{\vv y_1} (\vv y_2 - \vv y_1) 
		\,\,\,=\!\!
		\sum_{\vv y_1'\FR \vv y_2' \in G'} \kk'_{\vv y'_1 \FR \vv y'_2} \vv x^{\vv y'_1} (\vv y'_2 - \vv y'_1)
	}
    for all $\vv x \in \rrpp^n$. We say that $G'_{\vv\kk'}$ is another \df{realization} of $G_{\vv\kk}$. 
\end{defn}

\rmk From (\ref{eq:eqDE}), a necessary and sufficient condition for dynamical equivalence is 
\eqn{
\label{eq:DE}
	\sum_{\vv y_0\FR \vv y \in G} \kk_{\vv y_0 \FR \vv y} (\vv y - \vv y_0) 
	\,\,\,=\!\!
	\sum_{\vv y_0\FR \vv y' \in G'} \kk'_{\vv y_0 \FR \vv y'}  (\vv y' - \vv y_0)
}
for all $\vv y_0 \in V_G \cup V_{G'}$.\footnote{It is possible that either $\vv y_0 \not\in V_G$ or $\vv y_0 \not\in V_{G'}$. Then one side of (\ref{eq:DE}) is an empty sum, which by convention is $\vv 0$.}\\

Note that in the associated dynamical system of a mass-action system,  $\frac{d\vv x}{dt}$ belongs to the stoichiometric subspace $S$.  Moreover, $\rrpp^n$ is forward invariant under mass-action kinetics, i.e., if $\vv x(0) \in \rrpp^n$, then $\vv x(t) \in \rrpp^n$ for all $t \geq 0$~\cite{FeinLectNts}. Consequently, the trajectory $\vv x(t)$ is confined to the stoichiometric compatibility class $(\vv x(0) + S)_>$ for all $t \geq 0$. \\

\noindent 
\rmk 
The stoichiometric subspaces for dynamically equivalent systems can in principle be different. However, the \textit{kinetic subspaces} for the two systems must be the same.\footnote{The \emph{kinetic subspace} of a dynamical system $\frac{d{\vv x}}{dt}={\vv f(\vv x)}$ on a domain $\Omega$ is the linear subspace generated by $\{{\vv f(\vv x) \colon \vv x} \in \Omega\}$~\cite{FH77}. For a mass-action system, the kinetic subspace is a subset of the stoichiometric subspace $S$.} For example, the system in Figure~\ref{fig:changestoic}(a), made of the reaction \ce{2X ->[$k$] X + Y}, is dynamically equivalent to the system in Figure~\ref{fig:changestoic}(b), consisting of the reactions \ce{2X ->[$k$] X + Y} and \ce{0 <-[$k'$] Y ->[$k'$] 2Y}. By definition, the two systems have different stoichiometric subspaces. However, in these systems, the trajectory starting at $\vv x_0 \in \rrpp^n$ is confined to the affine space $\vv x_0 + \rr (-1,1)^T$ because their kinetic subspace is $\rr (-1,1)^T$. 

	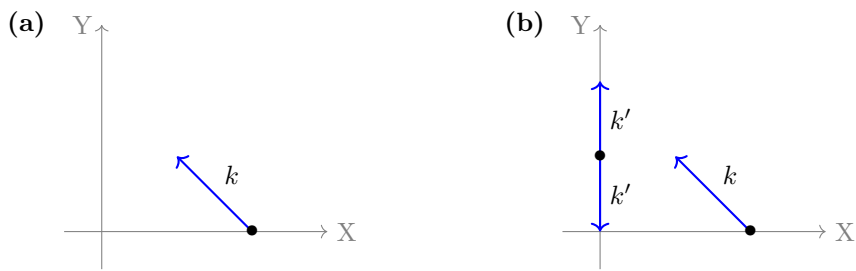
\begin{figure}[h!]
	\centering
		\begin{tikzpicture}
			\node at (-1,2.75) {\bf (a)};
			\draw [->, gray] (-0.5,0)--(3,0) node [right] {X};
			\draw [->, gray] (0,-0.5)--(0,2.75) node [left] {Y};
			\draw [->, blue, thick] (2,0)--(1,1) node [midway, above right] {\color{black}{$k$}};
			\node (2x) at (2,0) {$\bullet$};
		\end{tikzpicture}
		\hspace{1.5cm}
		\begin{tikzpicture}
			\node at (-1,2.75) {\bf (b)};
			\draw [->, gray] (-0.5,0)--(3,0) node [right] {X};
			\draw [->, gray] (0,-0.5)--(0,2.75) node [left] {Y};
			\draw [->, blue, thick] (2,0)--(1,1) node [midway, above right] {\color{black}{$k$}};
 			\draw [->, blue, thick] (0,1)--(0,0)node [midway, right] {\color{black}{$k'$}};
 			\draw [->, blue, thick] (0,1)--(0,2)node [midway, right] {\color{black}{$k'$}};
			\node (2x) at (2,0) {$\bullet$};
			\node (xy) at (0,1) {$\bullet$};
		\end{tikzpicture}
		\caption{Two dynamically equivalent systems with different stoichiometric subspaces. Trajectories are confined to the same affine invariant spaces because their kinetic subspaces are the same.}
		\label{fig:changestoic}
	\end{figure}

\begin{ex}
\label{ex:3systems}
	For the networks in Figure~\ref{fig:excrnrev}, let $\kk_{ij} >0$ be the rate constant on the reaction $\vv y_i \FR \vv y_j \in G$; let $\kk'_{ij}$ be the rate constant on the reaction $\vv y_i \FR \vv y_j \in {G'}$.  Suppose $\kk_{ij}$ and $\kk_{pq}'$ satisfy the following equations:
    \eq{
        \kk_{15} \begin{pmatrix} \,\,\, 1\, \, \\ 1   \end{pmatrix}
            &= \kk'_{12} \begin{pmatrix} \,\,\, 0 \,\, \\ 2 \end{pmatrix}
            + \kk'_{13} \begin{pmatrix} \,\,\, 3 \, \,  \\ 2 \end{pmatrix} 
            + \kk'_{14} \begin{pmatrix} \,\,\, 3 \, \,  \\ 0 \end{pmatrix},
        \quad 
        \\
        \kk_{25} \begin{pmatrix} 1 \\ -1 \end{pmatrix}
        &    = \kk'_{21} \begin{pmatrix} 0 \\ -2 \end{pmatrix}
            + \kk'_{23} \begin{pmatrix} \,\,\, 3\,\, \\ 0 \end{pmatrix}
            + \kk'_{24}  \begin{pmatrix} 3 \\ -2  \end{pmatrix}, \\
        \kk_{36}  \begin{pmatrix} -1 \\ -1   \end{pmatrix}
            &= \kk'_{31}  \begin{pmatrix} -3 \\ -2   \end{pmatrix}
            + \kk'_{32}  \begin{pmatrix} -3 \\ 0   \end{pmatrix}
            + \kk'_{34}  \begin{pmatrix} 0 \\ -2   \end{pmatrix},
        \quad 
        \\
        \kk_{46}  \begin{pmatrix} -1 \\ 1   \end{pmatrix}
        &    = \kk'_{41}  \begin{pmatrix} -3 \\ 0   \end{pmatrix}
            + \kk'_{42}  \begin{pmatrix} -3 \\ 2   \end{pmatrix}
            + \kk'_{43}  \begin{pmatrix}\,\,\, 0 \,\, \\ 2   \end{pmatrix}.
    }
    Then $G_{\vv \kk}$ and $G'_{\vv \kk'}$ are dynamically equivalent. The linear constraints on the rate constants arise from vector decomposition of the reaction vectors starting at the source vertices of $G$ and $G'$.

    In fact, if $\vv\kk$, $\vv \kk'$, and $\vv\kk^*$, where $\vv\kk^*$ is a vector of rate constants for $G^*$, satisfy some linear relations, the three mass-action systems $G_{\vv \kk}$, $G'_{\vv \kk'}$ and $G^*_{\vv\kk^*}$ are dynamically equivalent. \\
\end{ex}

Mass-action systems give rise to very diverse dynamics. For example, weakly reversible deficiency zero mass-action systems have exactly one locally asymptotically stable steady state (within the same stoichiometric compatibility class). Yet there are other mass-action systems that have periodic orbits or limit cycles~\cite{osc1, osc2,  osc3} and others that admit multiple steady states (within the same stoichiometric compatibility class)~\cite{cf05, cf06, Banaji_Craciun_2009}, and even chaotic dynamics~\cite{RevCY,chaosphys}. We refer the reader to \cite{RevCY, FeinLectNts, GunaNts,angeli_tutorial} for an introduction to mass-action systems.  In this paper, we focus on several kinds of steady states of mass-action systems.
  
\begin{defn}
\label{def:MASss}
	Let $G_{\vv\kk}$ be a mass-action system with the associated dynamical system
	\eq{
		\frac{d\vv x}{dt} \,\,=\!\! \sum_{\vv y\FR \vv y' \in G} \kk_{\vv y\FR \vv y'} \vv x^{\vv y} (\vv y' - \vv y).
	}
A state $\vv x_0\in \rrpp^n$ is a {\df{positive steady state}} if 
	\eqn{
		\label{def:eqMASss}
		\frac{d\vv x}{dt} 
		\,\,=  \!\!\sum_{\vv y\FR \vv y' \in G} \kk_{\vv y\FR \vv y'} \vv x_0^{\vv y} (\vv y' - \vv y) = \vv 0.
	}
	
\noindent
A positive steady state $\vv x_0 \in \rrpp^n$ is {\df{detailed-balanced}} if for every $\vv y \RR \vv y' \in G$, we have
	\eqn{
		\label{def:eqMASDB}
		 \kk_{\vv y\FR \vv y'} \vv x_0^{\vv y} = \kk_{\vv y' \FR \vv y} \vv x_0^{\vv y'}.
	}
	
\noindent
A positive steady state $\vv x_0 \in \rrpp^n$ is {\df{complex-balanced}} if for every vertex $\vv y_0 \in V_G$, we have
	\eqn{
		\label{def:eqMASCB}
		 \sum_{\vv y_0 \FR \vv y' \in G}\kk_{\vv y_0\FR \vv y'} \vv x_0^{\vv y_0} 
		 \,\,
		 =\!\!
		 \sum_{\vv y\FR \vv y_0 \in G }\kk_{\vv y\FR \vv y_0} \vv x_0^{\vv y}.
	}
\end{defn}

Intuitively, detailed balancing is when fluxes across every pair of reversible reactions are balanced; this is intimately related to the notion of microreversibility or dynamical equilibrium in physical chemistry~\cite{Boltzmann_1887, Boltzmann_1896}. Complex balancing is when fluxes through every vertex (i.e., reaction complex) is balanced.

\section{Fluxes on Reaction Networks}
\label{sec:FluxesCRN}

Most dynamical systems associated to reaction networks are nonlinear~\cite{HJ72,genMAS18, Savageau_Voit}. While nonlinear dynamical systems are generally difficult to study, the analysis of reaction networks is sometimes facilitated by the linear constraints arising from the network structure and stoichiometry. 

To illustrate what we mean, consider mass-action kinetics. The (generally nonlinear) dynamical system under mass-action kinetics has the form
	\eq{
		\frac{d\vv x}{dt} 
		\,\,=\!\!
		\sum_{\vv y \FR \vv y' \in G} 
			\nu_{\vv y \FR \vv y'}(\vv x)
			 (\vv y' - \vv y),
	}
where $\nu_{\vv y \FR \vv y'}(\vv x) = \kk_{\vv y\FR \vv y'} \vv x^{\vv y}$. Once the nonlinearity is hidden inside the reaction rate function $\nu_{\vv y \FR \vv y'}(\vv x)$, the linear structure remaining becomes apparent. 

Enumerate the set of reactions, $E = \{ \vv y_j \FR \vv y_j '\}_{j=1}^{|E|}$, and let $\vv \nu(\vv x) = (\nu_{\vv y_j \FR \vv y_j'}(\vv x))_{j=1}^{|E|}$ be a vector consisting of the reaction rate functions. Define the \df{stoichiometric matrix} $N \in \rr^{n \times |E|}$ as the matrix whose $j$th column is the $j$th reaction vector $\vv y'_j - \vv y_j$. Then the dynamical system above can be written succinctly as $\frac{d\vv x}{dt} = N\vv \nu(\vv x)$. 

In order to deal with the underlying linear structure, we do not keep track of the concentrations that give rise to $\vv \nu(\vv x)$ but leave it as a vector of unknowns. For this reason, we denote the value $\vv \nu(\vv x)$ simply as $\vv J$ and call it a \emph{flux vector}. 

\begin{defn}
\label{def:fluxsyst}
    A {\df{flux vector}} $\vv J =(J_{\vv y\FR \vv y'})_{\vv y\FR \vv y' \in G} \in \rrpp^{E}$ on a reaction network $G = (V,E)$ is a vector of positive numbers. The number $J_{\vv y\FR \vv y'}$ is called the \df{flux} of the reaction $\vv y\FR \vv y'$, and the pair $(G,\vv J)$ is called a \df{flux system}.\footnote{   
        The word ``system'' in ``flux system" is in the sense of a system of linear equations, rather than a dynamical system.
    }
\end{defn}

As with the rate constants, it may be convenient to refer to $J_{\vv y \to \vv y'}$ even when $\vv y \to \vv y' \not\in G$, in which case $J_{\vv y \to \vv y'} = 0$.\\
    
This idea of fluxes on a reaction network may be familiar to anyone who has worked with stoichiometric network analysis or flux balance analysis. One form of the analysis is to solve the linear equation $N \vv J = \vv 0$, where the unknown vector $\vv J$ has nonnegative coordinates~\cite{FBARev, FBARev2}. Since we are interested in relating network structure with dynamics, if $\vv y \to \vv y'\in G$, we impose that $J_{\vv y \to \vv y'} > 0$. Also if $\vv y \RR \vv y'$ is a reversible reaction in $G$, then $J_{\vv y \to \vv y'}$ and $J_{\vv y' \to \vv y}$ are two positive components of the vector $\vv J$. A solution $\vv J > \vv 0$ of the equation $N\vv J = \vv 0$ corresponds to a positive steady state if $\vv J = \vv \nu(\vv x_0)$ for some $\vv x_0 \in \rrpp^n$. We define the flux analogues of positive steady state, detailed-balanced steady state, and complex-balanced steady state.

\begin{defn}
\label{def:ssflux}
A {\df{steady state flux}} on a network $G = (V,E)$ is a flux vector $\vv J \in \rrpp^E$ satisfying
    \eqn{
    \label{def:SSF}
        \sum_{\vv y \FR \vv y' \in G} J_{\vv y\FR \vv y'} (\vv y' - \vv y) = \vv 0.
    }
A flux $\vv J\in \rrpp^E$ is said to be {\df{detailed-balanced}} if for every $\vv y \FR \vv y' \in G$, we have 
	\eqn{
	\label{def:DBF}
		 J_{\vv y\FR \vv y' } = J_{\vv y' \FR \vv y}.
	}
A flux $\vv J \in \rrpp^E$ is said to be {\df{complex-balanced}} if for every $\vv y_0 \in V$, we have
    \eqn{
    \label{def:CBF}
        \sum_{\vv y_0 \FR \vv y' \in G} J_{\vv y_0 \FR \vv y'} = \sum_{\vv y \FR \vv y_0 \in G} J_{\vv y \FR \vv y_0}.
    }
\end{defn}

A steady state flux is a positive vector $\vv J$ in $\ker N$, where the stoichiometric matrix $N$ has the reaction vectors as its columns. As a shorthand, we refer to the flux system $(G,\vv J)$ as detailed-balanced if $\vv J$ is a detailed-balanced flux on $G$. Similarly defined is a complex-balanced flux system on $G$. It will be clear from context whether a complex-balanced system refers to a mass-action system or a flux system. 

\begin{ex}
\label{ex:fluxsyst}
    An example of a flux system $(G, \vv J)$ is shown in Figure~\ref{fig:fluxsystem}. The positive number labelled on each edge $\vv y \to \vv y'$  is the flux $J_{\vv y \to \vv y'}$ of that reaction. 

    Note that this flux system could have risen from a mass-action system. For example, suppose the numbers labelled on the edges are taken to be rate constants $\vv k$, and the state of the system is $\vv x = \vv 1$. Then $(G, \vv J)$ would be the flux system based off of the mass-action system $G_{\vv\kk}$.

    There is \emph{no unique} mass-action system that gives rise to a fixed flux system. For example, on the reaction network shown in Figure~\ref{fig:fluxsystem}, suppose that the rate constants are taken to be 
	\eq{\begin{array}{rrr}
		\kk'_{\footnotesize \cf{0}\to \cf{Y}} = 3,
		& \qquad 
		\kk'_{\footnotesize \cf{Y}\to\cf{X}+\cf{Y}} =1,
		& \qquad 
		\kk'_{\footnotesize \cf{X}+\cf{Y}\to \cf{0}} = 1 ,
		\ds\vphantom{\frac{}{}}
		\\
		\kk'_{\footnotesize \cf{Y}\to \cf{0}} = \frac{1}{2},
		&
		\kk'_{\footnotesize \cf{X}+\cf{Y}\to 2\cf{X}} = \frac{5}{2},
		&
		\kk'_{\footnotesize 2\cf{X}\to \cf{X}+\cf{Y}} = 5,
		\ds\vphantom{\frac{}{}}
		\end{array}
	}
    and that the state of the system is $\vv x_0 = (1,2)^T$; then it can be shown that $(G,\vv J)$ is the flux system of the mass-action system $G_{\vv\kk'}$ at the state $\vv x_0$.

    This flux system $(G, \vv J)$ is complex-balanced. For example, at the vertex $(0,1)$ corresponding to $\cf{Y}$, there is one reaction going into it with flux value $3$, and there are two reactions leaving this vertex, with sum of fluxes being $2 + 1 = 3$. 
\end{ex}
\medskip 

	\begin{figure}[h!]
	\centering
		\begin{tikzpicture}[scale=2]
			\draw [->, gray] (-0.25,0)--(2.5,0) node [right] {X};
			\draw [->, gray] (0,-0.25)--(0,1.5) node [left] {Y};
			\node (0) at (0,0) {$\bullet$};
			\node (2x) at (2,0) {$\bullet$};
			\node (xy) at (1,1) {$\bullet$};
 			\node (y) at (0,1) {$\bullet$};
 			\draw [-{>[harpoon]}, blue, thick, transform canvas={xshift=-1.5pt, yshift=0pt}] (0)--(y) node [midway, left] {\footnotesize{\textcolor{black}{3}}};
 			\draw [-{>[harpoon]}, blue, thick, transform canvas={xshift=1.5pt, yshift=0pt}] (y)--(0) node [midway, right] {\footnotesize{\textcolor{black}{1}}};
			\draw [->, blue, thick] (y)--(xy) node [midway, above] {\footnotesize{\textcolor{black}{2}}};
 			\draw [->, blue, thick] (xy)--(0)node [midway, below right] {\footnotesize{\textcolor{black}{2}}};
 			\draw [-{>[harpoon]}, blue, thick, transform canvas={xshift=1pt, yshift=1pt}] (xy)--(2x) node [midway, above right] {\footnotesize{\textcolor{black}{5}}};
 			\draw [-{>[harpoon]}, blue, thick, transform canvas={xshift=-1pt, yshift=-1pt}] (2x)--(xy) node [midway, below left] {\footnotesize{\textcolor{black}{5}}};
		\end{tikzpicture}
		\caption{An example of a flux system. The positive numbers on any edge $\vv y \to \vv y'$ is the flux $J_{\vv y \to \vv y'}$ of that reaction. Note that this flux system is complex-balanced.}
		\label{fig:fluxsystem}
	\end{figure}
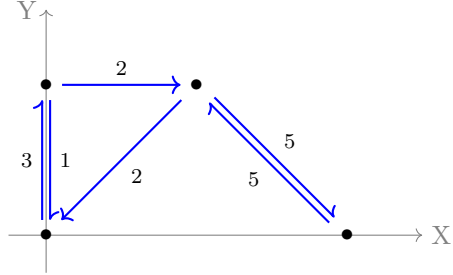

Whenever a flux vector arises from mass-action kinetics, i.e., $J_{\vv y\FR \vv y'} = \kk_{\vv y\FR \vv y'} \vv x^{\vv y}$, classical results for mass-action systems carry over to flux systems, as summarized in the following two lemmas.  

\begin{lem}
\label{lem:FluxSS}
	Let $G_\vv\kk$ be a mass-action system, and fix $\vv x\in \rrpp^n$. For each edge $\vv y \FR \vv y' \in G$, define $J_{\vv y \FR \vv y'} = \kk_{\vv y\FR \vv y'} \vv x^{\vv y}$, so that $\vv J = (J_{\vv y\FR \vv y'})_{\vv y\FR \vv y' \in G}$ is a flux vector on the network $G$. The following hold:
	\begin{enumerate}
	\item
		The flux vector $\vv J$ is a steady state flux on $G$ if and only if $\vv x$ is a positive steady state of $G_\vv\kk$. 
	\item
		The flux vector $\vv J$ is detailed-balanced if and only if $\vv x$ is a detailed-balanced steady state for $G_\vv\kk$.
	\item
		The flux vector $\vv J$ is complex-balanced if and only if $\vv x$ is a complex-balanced steady state for $G_\vv\kk$. 
	\end{enumerate}
\end{lem}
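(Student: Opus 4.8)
The plan is to prove all three equivalences by the same mechanism: the substitution $J_{\vv y \FR \vv y'} = \kk_{\vv y \FR \vv y'}\vv x^{\vv y}$ turns each defining equation for fluxes in Definition~\ref{def:ssflux} into the corresponding defining equation for mass-action steady states in Definition~\ref{def:MASss}, term by term. First I would record the preliminary observation that $\vv J$ really is a flux vector in the sense of Definition~\ref{def:fluxsyst}: since $\vv x \in \rrpp^n$ and $\kk_{\vv y \FR \vv y'} > 0$ for every reaction, each coordinate $J_{\vv y \FR \vv y'} = \kk_{\vv y \FR \vv y'}\vv x^{\vv y}$ is strictly positive, so $\vv J \in \rrpp^E$.

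For part (1), I would substitute $J_{\vv y \FR \vv y'} = \kk_{\vv y \FR \vv y'}\vv x^{\vv y}$ directly into the steady-state-flux condition~\eqref{def:SSF}; the resulting identity $\sum_{\vv y \FR \vv y' \in G}\kk_{\vv y \FR \vv y'}\vv x^{\vv y}(\vv y' - \vv y) = \vv 0$ is exactly condition~\eqref{def:eqMASss} with $\vv x_0 = \vv x$, and since $\vv x$ is already positive this says precisely that $\vv x$ is a positive steady state of $G_{\vv\kk}$; both implications are the same line read in opposite directions. For parts (2) and (3) the same substitution sends~\eqref{def:DBF} to~\eqref{def:eqMASDB} and~\eqref{def:CBF} to~\eqref{def:eqMASCB}, respectively, again as literal rewritings, so the matching of the two detailed-balanced (resp. complex-balanced) conditions is immediate once the steady-state prerequisite below is dealt with.

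The only point requiring a small argument beyond pure substitution is that Definition~\ref{def:MASss} only calls $\vv x_0$ detailed-balanced or complex-balanced after it has been certified a positive steady state, whereas Definition~\ref{def:ssflux} imposes no such prerequisite on $\vv J$. So in the ``flux $\Rightarrow$ mass-action'' direction of (2) and (3) I would first show that a detailed-balanced (resp. complex-balanced) flux is automatically a steady state flux, and then invoke part (1). For detailed balancing this is the telescoping observation that each reversible pair contributes $J_{\vv y \FR \vv y'}(\vv y' - \vv y) + J_{\vv y' \FR \vv y}(\vv y - \vv y') = \vv 0$; for complex balancing it is the standard regrouping of $\sum_{\vv y \FR \vv y'}J_{\vv y \FR \vv y'}(\vv y' - \vv y)$ by vertex, which rewrites it as $\sum_{\vv y_0 \in V}\bigl(\sum_{\vv y \FR \vv y_0}J_{\vv y \FR \vv y_0} - \sum_{\vv y_0 \FR \vv y'}J_{\vv y_0 \FR \vv y'}\bigr)\vv y_0$, a sum that vanishes term by term under~\eqref{def:CBF}. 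The identical computations with $J_{\vv y \FR \vv y'}$ replaced by $\kk_{\vv y \FR \vv y'}\vv x^{\vv y}$ handle the ``mass-action $\Rightarrow$ flux'' direction. I do not anticipate a genuine obstacle here; the lemma is a bookkeeping translation, and the only things to be careful about are matching the index sets in the detailed-balanced conditions (all reactions versus reversible pairs) and confirming the steady-state prerequisite as just described.
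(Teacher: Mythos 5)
Your substitution argument is correct and matches the paper's treatment: the paper states Lemma~\ref{lem:FluxSS} without a written proof, regarding it as an immediate consequence of plugging $J_{\vv y \FR \vv y'} = \kk_{\vv y\FR \vv y'} \vv x^{\vv y}$ into Definitions~\ref{def:MASss} and~\ref{def:ssflux}, which is exactly what you do. Your additional step --- verifying that a detailed-balanced or complex-balanced flux is automatically a steady state flux so the prerequisite in Definition~\ref{def:MASss} is satisfied --- is precisely the implication the paper records separately in Lemma~\ref{lem:CBFWR} (there obtained by citing classical results), and your direct telescoping/regrouping proof of it is sound.
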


\begin{lem}
\label{lem:CBFWR}
If $G$ admits a detailed-balanced flux, then $G$ is reversible; if $G$ admits a complex-balanced flux, then $G$ is weakly reversible. If a flux is detailed-balanced on $G$, then it is also complex-balanced; if a flux is complex-balanced, then it is also a steady state flux.
\end{lem}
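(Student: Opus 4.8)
The plan is to peel off the four assertions in increasing order of difficulty; three of them are immediate once the conventions of Definitions~\ref{def:fluxsyst} and~\ref{def:ssflux} are unwound, and only the weak-reversibility claim needs a genuine (though still elementary) graph-theoretic argument.

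First I would dispatch reversibility. A flux vector is strictly positive on the edges of $G$ and equals $0$ off them; so if $\vv J$ is detailed-balanced and $\vv y \FR \vv y' \in G$, then $0 < J_{\vv y \FR \vv y'} = J_{\vv y' \FR \vv y}$ forces $\vv y' \FR \vv y \in G$, i.e.\ $G$ is reversible. The implication ``detailed-balanced $\Rightarrow$ complex-balanced'' then follows: since $G$ is reversible, the out-neighbors of a vertex $\vv y_0$ are exactly its in-neighbors, and~(\ref{def:DBF}) pairs $J_{\vv y_0 \FR \vv z}$ with $J_{\vv z \FR \vv y_0}$ for each neighbor $\vv z$; summing over $\vv z$ gives~(\ref{def:CBF}) at $\vv y_0$. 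For ``complex-balanced $\Rightarrow$ steady state'' I would write the left-hand side of~(\ref{def:SSF}) as $\sum_{\vv y \FR \vv y' \in G} J_{\vv y \FR \vv y'}\vv y' - \sum_{\vv y \FR \vv y' \in G} J_{\vv y \FR \vv y'}\vv y$ and reindex each piece by a vertex: grouping the first sum by product vertex turns it into $\sum_{\vv y_0 \in V}\left(\sum_{\vv y \FR \vv y_0 \in G} J_{\vv y \FR \vv y_0}\right)\vv y_0$, and grouping the second by source vertex turns it into $\sum_{\vv y_0 \in V}\left(\sum_{\vv y_0 \FR \vv y' \in G} J_{\vv y_0 \FR \vv y'}\right)\vv y_0$; by~(\ref{def:CBF}) these coincide coefficient by coefficient, so the difference is $\vv 0$.

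The weak-reversibility claim is where the actual work lies. I would first record a ``flux conservation across a cut'': for any $W \subseteq V$, summing~(\ref{def:CBF}) over $\vv y_0 \in W$ makes every edge internal to $W$ cancel, leaving
\[
	\sum_{\substack{\vv y \FR \vv y' \in G \\ \vv y \in W,\ \vv y' \notin W}} J_{\vv y \FR \vv y'} \;=\; \sum_{\substack{\vv y \FR \vv y' \in G \\ \vv y \notin W,\ \vv y' \in W}} J_{\vv y \FR \vv y'}.
\]
Now suppose for contradiction that some connected component $L$ of $G$ is not strongly connected. Its condensation (contract each strongly connected component to a point) is then a weakly connected directed acyclic graph with at least two vertices, so it has a sink vertex; let $W \subseteq L$ be the strongly connected component it represents. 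No edge of $G$ leaves $W$ (within $L$ because $W$ is a sink of the condensation, and no edge joins distinct connected components of $G$), so the left sum above is $0$; but weak connectedness of the condensation together with the presence of a second vertex forces at least one edge into $W$, so the right sum is a sum of strictly positive fluxes and is nonzero. This contradiction shows every connected component of $G$ is strongly connected, i.e.\ $G$ is weakly reversible.

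The only real obstacle is the cut identity together with the standard digraph fact that a weakly connected DAG on at least two vertices has a sink of positive in-degree; once those are in place the rest is bookkeeping. The one point to stay careful about is that strict positivity of flux vectors on edges is exactly what powers both the reversibility step and the final contradiction.
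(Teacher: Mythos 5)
Your proof is correct, but it follows a genuinely different route from the paper's. The paper's proof is a two-line reduction: set the rate constants equal to the fluxes, $\kk_{\vv y \FR \vv y'} = J_{\vv y \FR \vv y'}$, observe via Lemma~\ref{lem:FluxSS} that $\vv x_0 = (1,\dots,1)^T$ is then a detailed-balanced (resp.\ complex-balanced, positive) steady state of the mass-action system $G_{\vv\kk}$, and invoke the classical results of Horn and Feinberg that detailed balancing forces reversibility, complex balancing forces weak reversibility, and so on. You instead reprove everything directly at the flux level: the positivity-of-fluxes argument for reversibility, the pairing argument for detailed-balanced $\Rightarrow$ complex-balanced, the reindexing-by-vertex argument for complex-balanced $\Rightarrow$ steady state, and, for the only nontrivial claim, a cut identity obtained by summing~(\ref{def:CBF}) over a vertex set $W$ combined with the condensation/sink argument to rule out a non-strongly-connected component. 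That last argument is essentially a self-contained re-derivation of the classical fact the paper cites, and it is sound: the sink component of the weakly connected condensation has empty out-cut but nonempty in-cut, and strict positivity of fluxes on edges of $G$ yields the contradiction. What your approach buys is a proof with no external dependencies that makes transparent exactly where positivity of $\vv J$ is used; what the paper's approach buys is brevity and an explicit bridge (via Lemma~\ref{lem:FluxSS}) between the flux formalism and the established mass-action literature, which is the viewpoint the rest of the paper leans on.
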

    \begin{proof}
        Let $ \vv J$ be a flux vector on a network $G$ --- either detailed-balanced or complex-balanced or merely a steady state flux. On $G$, define a mass-action system $G_\vv\kk$ with rate constants $k_{\vv y \to \vv y'} = J_{\vv y \to \vv y'}$ for each $\vv y \to \vv y' \in G$. Then $\vv x_0 = (1,\cdots, 1)^T$ is a (detailed-balanced or complex-balanced or positive) steady state. 
        
        Lemma~\ref{lem:CBFWR} follows from classical results on mass-action systems~\cite{GunaNts, FeinLectNts, Feinberg_1987, Fein72, Horn72, Horn_1974}. 
    \end{proof}

${}$

As we have seen in the previous section, some mass-action systems are dynamically equivalent; similarly there are flux equivalent systems. We define an equivalence relation for flux systems in  $\rr^n$.

\begin{defn}
\label{def:FE}
    Two flux systems $(G,\vv J)$ and $(G', \vv J')$ are {\df{flux equivalent}} if for every vertex $\vv y_0 \in V_G \cup V_{G'}$,\footnote{As before, we adopt the convention that the empty sum is $\vv 0$, i.e., $\sum_{\vv y \FR\vv  y' \in \emptyset} J_{\vv y\FR \vv y'} (\vv y' - \vv y) = \vv 0$.} 
    we have
    \eqn{\label{eq:FE}
        \sum_{\vv y_0 \FR \vv y \in G} J_{\vv y_0 \FR \vv y} (\vv y - \vv y_0) 
        \, \,  =\!\! \sum_{\vv y_0 \FR \vv  y' \in {G'}} J'_{\vv y_0 \FR \vv y'} (\vv y' - \vv y_0).
    }
    We denote equivalent flux systems by $(G, \vv J) \sim (G', \vv J')$ and say that $(G', \vv J')$ is a realization of $(G, \vv J)$. 
\end{defn}

\begin{lem}
Flux equivalence is an equivalence relation.
\end{lem}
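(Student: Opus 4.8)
The statement to prove is that flux equivalence (Definition~\ref{def:FE}) is an equivalence relation. This is a routine verification, so my plan is to check the three defining properties in turn, being careful about the bookkeeping that arises because the two flux systems need not share a vertex set.

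\textbf{Reflexivity.} Given a flux system $(G,\vv J)$, I want $(G,\vv J)\sim(G,\vv J)$. For every $\vv y_0 \in V_G$, Equation~\eqref{eq:FE} reads $\sum_{\vv y_0 \FR \vv y \in G} J_{\vv y_0 \FR \vv y}(\vv y - \vv y_0) = \sum_{\vv y_0 \FR \vv y \in G} J_{\vv y_0 \FR \vv y}(\vv y - \vv y_0)$, which holds trivially. (There is no subtlety with the empty-sum convention here since both sides are literally the same sum.)

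\textbf{Symmetry.} Suppose $(G,\vv J)\sim(G',\vv J')$. The index set $V_G \cup V_{G'}$ over which Equation~\eqref{eq:FE} must hold is symmetric in the two systems, and for each $\vv y_0$ in that set, the required identity for $(G',\vv J') \sim (G,\vv J)$ is just Equation~\eqref{eq:FE} with the two sides swapped; since equality of vectors in $\rr^n$ is symmetric, this is immediate. I should note explicitly that the empty-sum convention is what makes the definition symmetric even when $\vv y_0$ lies in only one of $V_G$, $V_{G'}$.

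\textbf{Transitivity.} Suppose $(G,\vv J)\sim(G',\vv J')$ and $(G',\vv J')\sim(G'',\vv J'')$. I need to show $(G,\vv J)\sim(G'',\vv J'')$, i.e. that for every $\vv y_0 \in V_G \cup V_{G''}$,
\[
    \sum_{\vv y_0 \FR \vv y \in G} J_{\vv y_0 \FR \vv y}(\vv y - \vv y_0) = \sum_{\vv y_0 \FR \vv y'' \in G''} J''_{\vv y_0 \FR \vv y''}(\vv y'' - \vv y_0).
\]
Fix such a $\vv y_0$. If $\vv y_0 \in V_{G'}$, then the first hypothesis gives that the left-hand side equals $\sum_{\vv y_0 \FR \vv y' \in G'} J'_{\vv y_0 \FR \vv y'}(\vv y' - \vv y_0)$, and the second hypothesis gives that this in turn equals the right-hand side, so we are done by chaining the two equalities. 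If $\vv y_0 \notin V_{G'}$, then the intermediate sum over $G'$ is empty, hence $\vv 0$; the first hypothesis (applied at $\vv y_0 \in V_G \cup V_{G'}$ if $\vv y_0 \in V_G$, and vacuously otherwise via the empty-sum convention) forces the left-hand side to be $\vv 0$, and likewise the second hypothesis forces the right-hand side to be $\vv 0$, so the two sides agree. The only mildly delicate point — and the one I would present most carefully — is this last case analysis on whether $\vv y_0 \in V_{G'}$, which is exactly where the empty-sum convention earns its keep; everything else is a one-line chain of equalities. There is no real obstacle here, just the need to handle the vertex-set bookkeeping cleanly.
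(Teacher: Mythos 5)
Your proof is correct and follows essentially the same route as the paper: reflexivity and symmetry are immediate, and transitivity is the chain of equalities through the intermediate system $(G',\vv J')$, with the case $\vv y_0 \notin V_{G'}$ handled by the empty-sum convention (which the paper dispatches in a single remark and you spell out slightly more explicitly). No gaps.
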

	\begin{proof}
		That flux equivalence is symmetric and reflexive is clear. Suppose $(G, \vv J) \sim (G', \vv J')$ and $(G',\vv J') \sim (G^*, \vv J^*)$. 
		Transitivity follows from
		\eq{
			\sum_{\vv y_0 \FR \vv y \in G} J_{\vv y_0 \FR \vv y} (\vv y - \vv y_0)
			\,\,=\!\!
			\sum_{\vv y_0 \FR \vv y \in G'} J'_{\vv y_0 \FR \vv y} (\vv y - \vv y_0) 
			\,\,=\!\!
			\sum_{\vv y_0 \FR \vv y \in G^*} J^*_{\vv y_0 \FR \vv y} (\vv y - \vv y_0)
		}
for any $\vv y_0 \in V_{G} \cup V_{G'} \cup V_{G^*}$. Note that if $\vv y_0 \not\in V_{G'}$, then the sums above are all $\vv 0$. 
	\end{proof}	

Suppose a flux vector arises from a mass-action system; one expects the notion of dynamical equivalence  to line up with that of flux equivalence. 

\begin{prop}
\label{prop:DEMAS}
Let $G_\vv \kk$, $G'_{\vv \kk'}$ be mass-action systems, and fix $\vv x \in \rrpp^n$. For each edge $\vv y \to \vv y' \in G$, let $J_{\vv y \FR \vv y'} = \kk_{\vv y \FR \vv y'} \vv x^{\vv y} $, so that $\vv J(\vv x) = (J_{\vv y\FR \vv y'})_{\vv y \FR \vv y' \in G} $ is a flux vector on $G$. Similarly, define the flux vector $\vv J'(\vv x) = (J'_{\vv y \FR\vv  y'})_{\vv y \FR \vv y' \in G'} $ on $G'$, where $J'_{\vv y \FR \vv y'} = \kk'_{\vv y \FR \vv y'} \vv x^{\vv y} $. Then the following are equivalent:
	\begin{enumerate}
	\item
		The mass-action systems $G_\vv \kk$ and $G'_{\vv \kk'}$ are dynamically equivalent.
	\item
		The flux systems $(G, \vv J(\vv x))$, $(G', \vv J'(\vv x))$ are flux equivalent for all $\vv x \in \rrpp^n$.
	\item
		The flux systems $(G, \vv J(\vv x))$, $(G', \vv J'(\vv x))$ are flux equivalent for some $\vv x \in \rrpp^n$.
	\end{enumerate}
\end{prop}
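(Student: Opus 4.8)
The plan is to prove the cycle of implications $(1) \Rightarrow (2) \Rightarrow (3) \Rightarrow (1)$. The key observation is that flux equivalence of $(G, \vv J(\vv x))$ and $(G', \vv J'(\vv x))$ at a single point $\vv x$ is, after dividing out a positive scalar, exactly the coordinate-wise condition~(\ref{eq:DE}) characterizing dynamical equivalence, and that this condition does not involve $\vv x$ at all; so $(2)$ and $(3)$ collapse onto the same statement.

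First I would unpack what flux equivalence of $(G, \vv J(\vv x))$ and $(G', \vv J'(\vv x))$ means. By Definition~\ref{def:FE}, it asks that for every $\vv y_0 \in V_G \cup V_{G'}$,
\[
\sum_{\vv y_0 \FR \vv y \in G} J_{\vv y_0 \FR \vv y}(\vv y - \vv y_0) \, = \sum_{\vv y_0 \FR \vv y' \in G'} J'_{\vv y_0 \FR \vv y'}(\vv y' - \vv y_0).
\]
Substituting $J_{\vv y_0 \FR \vv y} = \kk_{\vv y_0 \FR \vv y}\vv x^{\vv y_0}$ and $J'_{\vv y_0 \FR \vv y'} = \kk'_{\vv y_0 \FR \vv y'}\vv x^{\vv y_0}$ and factoring the common positive scalar $\vv x^{\vv y_0}$ out of each side, this is equivalent (since $\vv x^{\vv y_0} > 0$) to
\[
\sum_{\vv y_0 \FR \vv y \in G} \kk_{\vv y_0 \FR \vv y}(\vv y - \vv y_0) \, = \sum_{\vv y_0 \FR \vv y' \in G'} \kk'_{\vv y_0 \FR \vv y'}(\vv y' - \vv y_0),
\]
which is precisely~(\ref{eq:DE}) --- a condition that does not depend on $\vv x$. (The empty-sum convention handles the cases $\vv y_0 \notin V_G$ or $\vv y_0 \notin V_{G'}$ uniformly on both sides.)

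With this reduction in hand, the three implications are immediate. For $(1) \Rightarrow (2)$: dynamical equivalence gives~(\ref{eq:DE}) by the Remark following Definition~\ref{def:DE}, hence the flux-equivalence identity holds at every $\vv x \in \rrpp^n$. The implication $(2) \Rightarrow (3)$ is trivial. For $(3) \Rightarrow (1)$: flux equivalence at some $\vv x$ yields~(\ref{eq:DE}) by the computation above, and then the Remark following Definition~\ref{def:DE} --- which states that~(\ref{eq:DE}) is necessary and sufficient for dynamical equivalence --- gives $(1)$.

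I do not expect a real obstacle here; the only point requiring care is the passage between~(\ref{eq:eqDE}) and~(\ref{eq:DE}), i.e., the Remark after Definition~\ref{def:DE}, which rests on the linear independence of the monomial functions $\{\vv x \mapsto \vv x^{\vv y_0} : \vv y_0 \in V_G \cup V_{G'}\}$ on $\rrpp^n$. Since that Remark is already available, what remains is bookkeeping about the vertex sets and the empty-sum convention.
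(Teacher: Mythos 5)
Your proof is correct and uses essentially the same idea as the paper: the per-vertex monomial $\vv x^{\vv y_0}$ is a positive scalar that can be factored out, so flux equivalence at any single $\vv x$ collapses to the $\vv x$-independent vertex-wise condition~(\ref{eq:DE}), which the Remark after Definition~\ref{def:DE} identifies with dynamical equivalence. The paper phrases the step $(3)\Rightarrow(1)$ as rescaling the vertex-wise difference by $\vv x^{\vv y_0}/\vv x_0^{\vv y_0}$, which is the same computation in slightly different clothing.
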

	\begin{proof}
		It is clear that statements 1 and 2 are equivalent, and that statement 2 implies statement 3. Showing the implication of statement 1 from statement 3 will complete the proof. Let $\vv x_0 \in \rrpp^n$ be a vector such that $(G, \vv J(\vv x_0)) \sim (G', \vv J'(\vv x_0))$. For any $\vv y_0 \in V_G \cup V_{G'}$ and arbitrary $\vv x \in \rrpp^n$, we have
		\eq{
			&\hphantom{11}\,
			\sum_{\vv y_0 \FR \vv y \in G} J_{\vv y_0 \FR \vv y}(\vv x) (\vv y - \vv y_0)
			\,\,-\!\!
			\sum_{\vv y_0 \FR \vv y' \in G'} J'_{\vv y_0 \FR \vv y'}(\vv x) (\vv y' - \vv y_0)
			\\&= 
			\sum_{\vv y_0 \FR \vv y \in G} \kk_{\vv y_0 \FR \vv y} \vv x^{\vv y_0} (\vv y - \vv y_0)
			\,\,-\!\!
			\sum_{\vv y_0 \FR \vv y' \in G'} \kk'_{\vv y_0 \FR \vv y'} \vv x^{\vv y_0} (\vv y' - \vv y_0)
			\\&= 
			\frac{\vv x^{\vv y_0}}{\vv x_0^{\vv y_0}} \left(
			\sum_{\vv y_0 \FR \vv y \in G} \kk_{\vv y_0 \FR \vv y} \vv x_0^{\vv y_0} (\vv y - \vv y_0)
			\,\,-\!\!
			\sum_{\vv y_0 \FR \vv y' \in G'} \kk'_{\vv y_0 \FR \vv y'} \vv x_0^{\vv y_0} (\vv y' - \vv y_0)
			\right)
			\\&= 
			\frac{\vv x^{\vv y_0}}{\vv x_0^{\vv y_0}} \left(
			\sum_{\vv y_0 \FR \vv y \in G} J_{\vv y_0 \FR \vv y} (\vv x_0) (\vv y - \vv y_0)
			\,\,-\!\!
			\sum_{\vv y_0 \FR \vv y' \in G'} J'_{\vv y_0 \FR \vv y'} (\vv x_0) (\vv y' - \vv y_0)
			\right)
			\\&= \vv 0.
		}
	\end{proof}

\noindent
\rmk 	
The proof above holds for kinetics other than mass-action type. For each (source) vertex $\vv y \in V_{G} \cup V_{G'}$, define a rate function $\nu_{\vv y}\colon \rrpp^n \to \rrpp$. Then the above proposition holds when the flux vectors are defined to be $J_{\vv y \to \vv y'} = \kk_{\vv y \to \vv y'} \nu_{\vv y}(\vv x)$ for each $\vv y \to \vv y' \in G$, and $J'_{\vv y \to \vv y'} = \kk'_{\vv y \to \vv y'} \nu_{\vv y}(\vv x)$ for each $\vv y \to \vv y' \in G'$. \\

In the following proposition, we reduce a nonlinear problem about mass-action systems to a linear problem about flux systems. Instead of showing that a mass-action system is dynamically equivalent to a complex-balanced (or detailed-balanced) system, it suffices to show that an appropriately defined flux system is flux equivalent to a complex-balanced (or detailed-balanced) system. 

\begin{prop}
\label{prop:DEtool}
	Let $G_{\vv \kk}$ be a mass-action system, and let $\vv x_0 \in \rrpp^n$. For each edge $\vv y \FR \vv y' \in G$, define $J_{\vv y \FR \vv y'} = \kk_{\vv y\FR \vv y'} \vv x_0^{\vv y}$, so that $\vv J = (J_{\vv y\FR \vv y'})_{\vv y\FR \vv y' \in G}$ is a flux vector on the network $G$. Suppose $(G, \vv J)$ is flux equivalent to $(G', \vv J')$, where $\vv J'$ is complex-balanced; then $G_\vv\kk$ is dynamically equivalent to a mass-action system $G'_{\vv\kk'}$, where $\vv x_0$ is a complex-balanced steady state for $G'_{\vv\kk'}$. Similarly, if $(G,\vv J)$ is flux equivalent to a detailed-balanced flux system $(G', \vv J')$, then $G_\vv\kk$ is dynamically equivalent to a mass-action system $G'_{\vv\kk'}$, where $\vv x_0$ is a detailed-balanced steady state for $G'_{\vv\kk'}$.
\end{prop}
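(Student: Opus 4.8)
The plan is to reverse-engineer a vector of rate constants $\vv\kk'$ on $G'$ from the flux $\vv J'$, and then to extract both conclusions directly from Proposition~\ref{prop:DEMAS} and Lemma~\ref{lem:FluxSS}. The only thing that makes this possible is that $\vv x_0$ is fixed once and for all and has strictly positive entries, so the monomials $\vv x_0^{\vv y}$ are nonzero and can be divided out.

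First, for each edge $\vv y \FR \vv y' \in G'$ I would define
\begin{equation*}
  \kk'_{\vv y \FR \vv y'} = \frac{J'_{\vv y \FR \vv y'}}{\vv x_0^{\vv y}}.
\end{equation*}
Since $\vv J'$ is a flux vector, $J'_{\vv y \FR \vv y'} > 0$, and since $\vv x_0 \in \rrpp^n$, $\vv x_0^{\vv y} > 0$; hence $\vv\kk' = (\kk'_{\vv y \FR \vv y'})_{\vv y \FR \vv y' \in G'} \in \rrpp^{E_{G'}}$ is a genuine vector of rate constants and $G'_{\vv\kk'}$ is a mass-action system. By construction, $J'_{\vv y \FR \vv y'} = \kk'_{\vv y \FR \vv y'}\vv x_0^{\vv y}$ for every $\vv y \FR \vv y' \in G'$, i.e.\ $\vv J'$ is exactly the flux vector of $G'_{\vv\kk'}$ evaluated at the state $\vv x_0$ in the notation of Proposition~\ref{prop:DEMAS}; likewise $\vv J$ is the flux vector of $G_{\vv\kk}$ at $\vv x_0$, which is given.

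Next I would establish dynamical equivalence. By hypothesis $(G,\vv J) \sim (G',\vv J')$, i.e.\ these two flux systems — which we have just identified as the flux systems of $G_{\vv\kk}$ and $G'_{\vv\kk'}$ at the common state $\vv x_0$ — are flux equivalent. This is precisely statement~(3) of Proposition~\ref{prop:DEMAS} (taking $\vv x = \vv x_0$), so statement~(1) of that proposition yields that $G_{\vv\kk}$ and $G'_{\vv\kk'}$ are dynamically equivalent. Finally, for the steady-state property: $\vv J'$ is a complex-balanced flux on $G'$ and also equals the flux vector of $G'_{\vv\kk'}$ at $\vv x_0$, so part~(3) of Lemma~\ref{lem:FluxSS} shows that $\vv x_0$ is a complex-balanced steady state of $G'_{\vv\kk'}$. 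The detailed-balanced case is identical, replacing ``complex-balanced'' by ``detailed-balanced'' throughout and using part~(2) of Lemma~\ref{lem:FluxSS} in place of part~(3).

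I do not expect a substantial obstacle here: the content of the proposition is the bookkeeping fact that, at a fixed positive state $\vv x_0$, rate constants and fluxes are interchangeable (because $\vv x_0^{\vv y} \neq 0$), after which the statement collapses onto Proposition~\ref{prop:DEMAS} and Lemma~\ref{lem:FluxSS}. The one point requiring care is insisting that the \emph{same} $\vv x_0$ be used at every stage — the given flux $\vv J$ comes from evaluating $G_{\vv\kk}$ at $\vv x_0$, and $\vv\kk'$ must be defined so that evaluating $G'_{\vv\kk'}$ at that same $\vv x_0$ returns $\vv J'$ — since this single choice is what makes the dynamical-equivalence step (via Proposition~\ref{prop:DEMAS}) and the complex-/detailed-balanced steady-state step (via Lemma~\ref{lem:FluxSS}) hold simultaneously.
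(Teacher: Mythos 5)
Your proposal is correct and follows essentially the same route as the paper's own proof: define $\kk'_{\vv y \FR \vv y'} = J'_{\vv y \FR \vv y'}/\vv x_0^{\vv y}$, invoke Proposition~\ref{prop:DEMAS} (flux equivalence at the single state $\vv x_0$ implies dynamical equivalence) and Lemma~\ref{lem:FluxSS} for the complex-balanced and detailed-balanced steady state conclusions. Your added emphasis on using the same $\vv x_0$ throughout is exactly the point the paper's shorter write-up leaves implicit.
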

	\begin{proof}
		For each edge $\vv y \to \vv y' \in G'$, define its rate constant to be 
		    \eq{
		        \kk'_{\vv y \to \vv y'} = \frac{J'_{\vv y \to \vv y'}}{\vv x_0^{\vv y}} > 0,
		    }
 		so that $G'_{\vv \kk'}$ is a mass-action system. By Proposition~\ref{prop:DEMAS}, the mass-action systems $G_\vv\kk$ and $G'_{\vv \kk'}$ are dynamically equivalent, and by Lemma~\ref{lem:FluxSS}, $\vv x_0$ is a complex-balanced steady state if $\vv J'$ is a complex-balanced flux on $G'$, and if $\vv J'$ is detailed-balanced on $G'$, then $\vv x_0$ is a detailed-balanced steady state.
	\end{proof}

\section{Complex balancing without additional vertices}
\label{sec:NoNewNodes}

The identification of possible network structures associated to a biochemical system, say, from experimental data, is closely related to identifying key players in the system (e.g., enzymes in metabolic networks, genes in genetic networks). While the general nonuniqueness implies that network identification may often be impossible, it may still be desirable to compute equivalent systems --- whether that be dynamical equivalence or flux equivalence --- in order to conclude that the system has better properties than first suspected, e.g., weak reversibility or  complex balance. This problem is not new~\cite{Sze2011, cpIdentifiability}. 

In recent years, the engineering community has utilized properties of mass-action systems in novel ways to designing and analyzing control systems~\cite{angeli_tutorial, Sontag1, KineticFeedbackDesign,DEMASControl}. For example, the controllers can be added in such a way that the resulting system is a complex-balanced mass-action system; from this, one can conclude that the control system has a unique positive steady state and  local stability~\cite{KineticFeedbackDesign,DEMASControl}. Moreover, very general results have been obtained on the stability of complex-balanced systems with delay~\cite{delayCB}.
	
Thus, there is strong incentive for developing effective computational methods to find structurally better dynamically equivalent systems. One approach uses linear programming, but an objective function must be chosen. To reduce the search space, one can decide to search for a realization with  the maximal and minimal number of edges~\cite{WRAlgDense, WRAlgSparse}. Nonetheless, the set of vertices to be included in the reaction network must be chosen ahead of time.  
	
In the examples of Figure~\ref{fig:changestoic}, the mass-action systems systems are dynamically equivalent, but one uses an additional source vertex, whose weighted vectors sum to zero. Intuition may say that additional vertices can only improve the chance to find a network with desirable properties, as additional parameters provide extra degrees of freedom. Even if that is the case, the question of {\it computability} arises. Even if by adding new vertices to the network, one can produce an equivalent complex-balanced system, there is no a priori bound on the number of new vertices needed. One cannot realistically add new vertices ad infinitum. 
	
Fortunately, we prove that {\em no additional vertices are needed} in order to check if a given system admits complex-balanced realizations. Thus, to check whether or not a network can admit a complex-balanced realization becomes a finite calculation, one that can be done by searching through the admissible domain as done in linear programming. Although the motivation came from mass-action systems, we prove our results in the more general setting of flux systems.
	
Our approach is to show that any such additional vertices in the network can be removed without changing the properties desired, namely, complex-balanced or weak reversibility. Such additional vertices will be called \emph{virtual sources}. 
	
\begin{defn}
    A vertex $\vv y_0 \in V_{s}$ is a \df{virtual source} of the flux system $(G,\vv J)$ if 
    \eqn{
        \sum_{\vv y_0 \to \vv y' \in G} J_{\vv y_0 \to \vv y'} (\vv y' - \vv y_0) = \vv 0,
    }
    where the sum is over all edges with $\vv y_0$ as its source.
\end{defn}

If the flux system $(G, \vv J)$ arises from a mass-action system, then $\vv y_0 \in V_s$ is a virtual source if and only if the monomial $\vv x^{\vv y_0}$ does \emph{not} appear\footnote{That is the monomial does not appear after simplifying.} on the right-hand side of the associated dynamical system (\ref{eq:eqMAS}). For example, if we consider fluxes that arise from mass-action kinetics in the network in Figure~\ref{fig:changestoic}(b), the vertex $\cf{Y}$ is a virtual source.

\bigskip	
	
In this section, we prove that if a flux vector on a weakly reversible reaction network is complex-balanced and has a virtual source $\vv y^*$, then there is an equivalent complex-balanced flux system that does not involve $\vv y^*$ at all. In short, virtual sources $\vv y^*$ are not needed for complex balancing. 
	
Just as an arbitrary concentration vector $\vv x \in \rrpp^n$ may not be a complex-balanced steady state for a weakly reversible mass-action system, so we may want to speak of fluxes that are not complex-balanced. To keep track of how far a flux vector is from being complex-balanced, we define the \emph{potential} at a vertex to be the difference between incoming and outgoing fluxes.

\begin{defn}
\label{def:potential}
	Let $G = (V,E)$ be a reaction network, and let $\vv J \in \rrpp^{E}$ be a flux vector on $G$. The \df{potential} at a vertex $\vv y^* \in V$ is the scalar quantity
	\eqn{
		P_{(G, \vv J)}(\vv y^*)
		\,\,=\!\!
		\sum_{\vv y \FR \vv y^* \in G} J_{\vv y \FR \vv y^*}
		\, \,-
		\sum_{\vv y^* \FR \vv y' \in G} J_{\vv y^* \FR \vv y'} .
	}
\end{defn}

\rmk 
The flux vector $\vv J$ is complex-balanced on $G$ if and only if $P_{(G,\vv J)}(\vv y) = 0$ for all $\vv y \in V_s$. \\

By an abuse of notation, if $\vv y^* \not\in G$, we still refer to the potential $P_{(G,\vv J)}(\vv y^*)$ by setting it to be $P_{(G,\vv J)}(\vv y^*) = 0$. 

In showing that virtual sources are not needed for complex balancing, the idea is to redirect the fluxes flowing into a virtual source $\vv y^*$ to other vertices while maintaining flux equivalence. If we are doing nothing more than redirecting flow of fluxes, the potential at every vertex does not change; therefore, we preserve complex balancing for the resulting flux system. This type of construction appeared first in~\cite{KineticFeedbackDesign} to show that new monomials were not necessary in feedback design.

We have to simultaneously keep track of the potential at each vertex and flux equivalence. We illustrate the key idea of Lemma~\ref{lem:nDbasecase} in Figure~\ref{fig:nDbasecase}. 

	\begin{figure}[h!]
		\centering
		\begin{tikzpicture}
			\node (t) at (2,1.3) {\blue{$\bullet$}};
			\node (1) at  (0, 0) {$\bullet$};
			\node (2) at (2.75,-0.25) {$\bullet$};
			\node (3) at (3.5,1.2) {$\bullet$};
			\node (4) at (1.75,3) {$\bullet$};
			\node (0) at (5, 2) {$\bullet$};
			\node [left = 0pt of t] {\blue{$\vv y^*$}};
			\node [left = 0pt of 1] {$\vv y_1$};
			\node [below right = -7pt and -6pt of 3] {$\vv y_3$};
			\node [below = 0pt of 2] {$\vv y_2$};
			\node [above = 0pt of 4] {$\vv y_4$};
			\node [right =0pt of 0] {$\vv z$};
			\draw [->] (t)--(1);
			\draw [->] (t)--(2);
			\draw [->] (t)--(3);
			\draw [->] (t)--(4);
			\draw [->] (0)--(t);
			\draw [dashed, opacity=0.2] (1)--(2)--(3)--(4)--(1)--(3)--(2)--(4);
		\end{tikzpicture}
		 \hspace{1cm}
		\begin{tikzpicture}
			\node (t) at (2,1.3) {\color{blue!35!white}{$\bullet$}};
			\node (1) at  (0, 0) {$\bullet$};
			\node (2) at (2.75,-0.25) {$\bullet$};
			\node (3) at (3.5,1.2) {$\bullet$};
			\node (4) at (1.75,3) {$\bullet$};
			\node (0) at (5, 2) {$\bullet$};
			\node [left = 0pt of t] {\color{blue!35!white}{$\vv y^*$}};
			\node [left = 0pt of 1] {$\vv y_1$};
			\node [below right = -7pt and -6pt of 3] {$\vv y_3$};
			\node [below = 0pt of 2] {$\vv y_2$};
			\node [above = 0pt of 4] {$\vv y_4$};
			\node [right =0pt of 0] {$\vv z$};
			\draw [->] (0)--(1);
			\draw [->] (0)--(2);
			\draw [->] (0)--(3);
			\draw [->] (0)--(4);
			\draw [dashed, opacity=0.2] (1)--(2)--(3)--(4)--(1)--(3)--(2)--(4);
		\end{tikzpicture}
		\begin{tikzpicture}[overlay]
			\node at (-14,4.5) {\bf (a) };
			\node at (-6.25,4.5) {\bf (b) };
		\end{tikzpicture}
		\caption{Illustrating the idea behind Lemma~\ref{lem:nDbasecase} in $\rr^3$. (a) Assume that $\vv y^*$ is a virtual source in the flux system $(G, \vv J)$. In (b) is an equivalent flux system $(G', \vv J')$, obtained by redirecting fluxes from $\vv z \to \vv y^* \to \vv y_j$ as fluxes from $\vv z \to \vv y_j$. }
		\label{fig:nDbasecase}
	\end{figure}
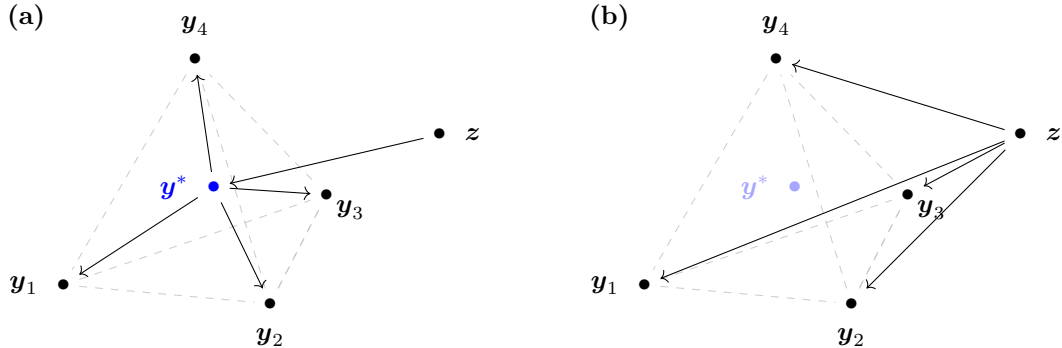

\begin{lem}
\label{lem:nDbasecase}
	Consider a reaction network $G$ consisting of the reactions $\vv z \to \vv y^*$ and $\vv y^* \FR \vv y_j$ for $j = 1,2,\dots, M$. Suppose $\vv y^*$ is a virtual source for a flux system $(G, \vv J)$ and its potential is $P_{(G,\vv J)}(\vv y^*) = 0$. Then there exists a flux equivalent system  $(G',\vv J')$ such that $\vv y^* \not\in V_{G'}$, and the potential at each vertex is preserved, i.e., $P_{(G,\vv J)}(\vv y_j) = P_{(G',\vv J')}(\vv y_j)$ for $1 \leq j \leq M$ and $P_{(G,\vv J)}(\vv z) = P_{(G',\vv J')}(\vv z)$. 

    The flux system $(G', \vv J')$ can be obtained constructively: remove the edges $\vv z \FR \vv y^*$ and $\vv y^* \FR \vv y_j$, and add the edges $\vv z \FR \vv y_j$ with fluxes $J'_{\vv z \FR \vv y_j} = J_{\vv y^* \FR \vv y_j}$. 
\end{lem}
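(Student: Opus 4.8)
The plan is to verify directly that the explicitly constructed flux system $(G',\vv J')$ is flux equivalent to $(G,\vv J)$ and preserves all potentials, so almost everything reduces to bookkeeping once the construction is laid out. First I would record the setup: in $G$ the only reactions are $\vv z \FR \vv y^*$ and $\vv y^* \FR \vv y_j$ for $j=1,\dots,M$, so the virtual source condition at $\vv y^*$ says $\sum_{j=1}^M J_{\vv y^* \FR \vv y_j}(\vv y_j - \vv y^*) = \vv 0$, and the potential condition $P_{(G,\vv J)}(\vv y^*)=0$ says $J_{\vv z \FR \vv y^*} = \sum_{j=1}^M J_{\vv y^* \FR \vv y_j}$. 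Then $G'$ is defined by deleting $\vv z \FR \vv y^*$ and all $\vv y^* \FR \vv y_j$, and adding edges $\vv z \FR \vv y_j$ with $J'_{\vv z \FR \vv y_j} := J_{\vv y^* \FR \vv y_j}$; note $\vv y^* \notin V_{G'}$ and $V_{G'} \subseteq \{\vv z, \vv y_1, \dots, \vv y_M\}$, with the caveat (to mention) that if some $\vv y_j = \vv z$ that edge is a self-loop and should simply be dropped, which does not affect the argument since such a term contributes $\vv 0$ to every relevant sum.

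Next I would check flux equivalence by going through each vertex $\vv y_0 \in V_G \cup V_{G'}$ and comparing $\sum_{\vv y_0 \FR \vv y \in G} J_{\vv y_0\FR\vv y}(\vv y - \vv y_0)$ with the $G'$ analogue. The only source vertices with outgoing edges are $\vv z$ and $\vv y^*$. At $\vv y^*$: the $G$-side is $\sum_j J_{\vv y^*\FR\vv y_j}(\vv y_j - \vv y^*) = \vv 0$ by the virtual source hypothesis, and the $G'$-side is an empty sum, hence $\vv 0$. At $\vv z$: the $G$-side is $J_{\vv z\FR\vv y^*}(\vv y^* - \vv z)$, and the $G'$-side is $\sum_j J'_{\vv z\FR\vv y_j}(\vv y_j - \vv z) = \sum_j J_{\vv y^*\FR\vv y_j}(\vv y_j - \vv z)$. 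Writing $\vv y_j - \vv z = (\vv y_j - \vv y^*) + (\vv y^* - \vv z)$ and splitting the sum, the first part is $\sum_j J_{\vv y^*\FR\vv y_j}(\vv y_j - \vv y^*) = \vv 0$ (virtual source), and the second part is $\bigl(\sum_j J_{\vv y^*\FR\vv y_j}\bigr)(\vv y^* - \vv z) = J_{\vv z\FR\vv y^*}(\vv y^* - \vv z)$ by the potential-zero condition, matching the $G$-side. For every other $\vv y_0$ (namely the $\vv y_j$ that are not sources of any edge) both sides are empty sums. Hence $(G,\vv J)\sim(G',\vv J')$.

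Finally I would check that potentials are preserved at $\vv z$ and at each $\vv y_j$. For $\vv z$: in $G$ there are no edges into $\vv z$ and one edge out, so $P_{(G,\vv J)}(\vv z) = -J_{\vv z\FR\vv y^*}$; in $G'$ there are no edges into $\vv z$ and the edges $\vv z\FR\vv y_j$ out, so $P_{(G',\vv J')}(\vv z) = -\sum_j J'_{\vv z\FR\vv y_j} = -\sum_j J_{\vv y^*\FR\vv y_j} = -J_{\vv z\FR\vv y^*}$ by the potential condition — equal. For each $\vv y_j$: in $G$ the edges into $\vv y_j$ are $\vv y^*\FR\vv y_j$ (flux $J_{\vv y^*\FR\vv y_j}$) plus possibly others among the listed reactions (but there are none, since all reactions into the $\vv y_j$ come from $\vv y^*$), and there are no edges out of $\vv y_j$ in $G$; in $G'$ the edge into $\vv y_j$ is $\vv z\FR\vv y_j$ with flux $J'_{\vv z\FR\vv y_j} = J_{\vv y^*\FR\vv y_j}$, and again none out; so the incoming-minus-outgoing totals agree and $P_{(G,\vv J)}(\vv y_j) = P_{(G',\vv J')}(\vv y_j)$. (If a reversed edge $\vv y_j \FR \vv y^*$ were present the statement as given does not include it, so I will only treat the reactions listed; the more general multi-virtual-source case is presumably handled by a later lemma.) There is essentially no hard step here — the only thing requiring genuine care is the telescoping identity $\vv y_j - \vv z = (\vv y_j - \vv y^*)+(\vv y^* - \vv z)$ combined with correctly invoking \emph{both} hypotheses (virtual source kills the "spread" term, potential-zero matches the "shift" term); the main obstacle is simply stating cleanly which edges are incident to which vertices in $G$ versus $G'$ so the potential bookkeeping is unambiguous, especially the degenerate possibility that some $\vv y_j$ coincide with each other or with $\vv z$.
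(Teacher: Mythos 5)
Your proof is correct and follows essentially the same route as the paper's: the same construction $J'_{\vv z \FR \vv y_j} = J_{\vv y^* \FR \vv y_j}$, the same decomposition $\vv y_j - \vv z = (\vv y_j - \vv y^*) + (\vv y^* - \vv z)$ with the virtual-source condition killing one term and $P_{(G,\vv J)}(\vv y^*)=0$ matching the other, and the same vertex-by-vertex potential bookkeeping. Your explicit handling of the degenerate case $\vv y_j = \vv z$ is a nice touch that the paper only addresses in a passing remark.
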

	\begin{proof}
	For $j =1,2,\dots, M$, let $\vv w_j = \vv y_j - \vv y^*$ and $\vv w_0 = \vv z - \vv y^*$ denote the reaction vectors. First, remove the edges $\vv y^* \FR \vv y_j$ coming out of $\vv y^*$. Because $\vv y^*$ is a virtual source, $\sum_{j=1}^{M} J_{\vv y^* \FR \vv y_j} \vv w_j = \vv 0$, so the resulting flux system is still equivalent to the original. Note that in this new flux system, only $\vv z$ is a source vertex. 
	
	Next, we redirect the reaction $\vv z \FR \vv y^*$. Instead of the reaction $\vv z \FR \vv y^*$ with flux $J_{\vv z \FR \vv y^*}$, we have $M$ reactions $\vv z \FR \vv y_j$ with fluxes $J'_{\vv z \FR \vv y_j} = J_{\vv y^* \FR \vv y_j}$. Let $(G', \vv J')$ denote this newest flux system.
	
	Recall that flux equivalence means (\ref{eq:FE}) holds at each vertex of $G$ and $G'$. Here we only need to look at the vertex $\vv z$ to show that $(G', \vv J') \sim (G,\vv J)$. Note that $\vv y_j - \vv z = \vv w_j - \vv w_0$. From $P_{(G,\vv J)}(\vv y^*) = 0$, we also have $\sum_{j=1}^{M} J_{\vv y^* \FR \vv y_j} = J_{\vv z \FR \vv y^*}$. Thus, the weighted sum of vectors coming out of $\vv z$ is 
	\eq{
		\sum_{j=1}^{M} J'_{\vv z \FR \vv y_j} \left( \vv y_j - \vv z\right)
		&= \sum_{j=1}^{M} J_{\vv y^* \FR \vv y_j} \left( \vv w_j - \vv w_0 \right)
		= \underbrace{
				\sum_{j=1}^{M} J_{\vv y^* \FR \vv y_j} \vv w_j
			}_{=\,\, \vv 0}
			\, - \,  \,
			\vv w_0 \sum_{j=1}^{M} J_{\vv y^* \FR \vv y_j} 
		= - J_{\vv z \FR \vv y^*} \vv w_0,
	}
    and $(G', \vv J') \sim (G, \vv J)$. 
		
	Finally, we prove that the potentials are unchanged. Trivially $P_{(G,\vv J)}(\vv y^*) = P_{(G',\vv J')}(\vv y^*) = 0$. Also $P_{(G,\vv J)}(\vv y_j) =  J_{\vv y^*\FR \vv y_j} =  J'_{\vv z \FR \vv y_j} = P_{(G',\vv J')}(\vv y_j)$ for $ j = 1,2,\dots, M$. Last but not least,  
	\eq{
		- P_{(G',\vv J')}(\vv z) &= 
			\sum_{j=1}^{M} J'_{\vv z \FR \vv y_j}
		= \sum_{j=1}^{M} J_{\vv y^* \FR \vv y_j} 
		= J_{\vv z \FR \vv y^*}
		= - P_{(G,\vv J)}(\vv z).
	}
We have shown that the resulting flux system $(G',\vv J')$ is flux equivalent to the original flux system $(G, \vv J)$, and the potential at each vertex is preserved.
	\end{proof}

\rmk In Lemma~\ref{lem:nDbasecase}, the source vertex $\vv z$ may \emph{not} be distinct from $\vv y_j$. \\

We now arrive at our main technical theorem (Theorem~\ref{thm:NoNewNodesnD}), a generalization of Lemma~\ref{lem:nDbasecase}. Here, the virtual source $\vv y^*$ may have multiple reactions coming into it and coming out of it. The proof will be an induction on the number of edges flowing into $\vv y^*$. At each step, we redirect a fraction of the fluxes flowing through $\vv y^*$ from one incoming edge. 

\begin{thm}
\label{thm:NoNewNodesnD} 
    Let $(G, \vv J)$ be a complex-balanced flux system on reaction network $G = (V,E)$. Suppose that $\vv y^* \in V$ is a virtual source. Then there exists an equivalent complex-balanced flux system $(G', \vv J')$ with $V_{G'} = V \setminus \{ \vv y^*\}$. Moreover,
    \eqn{ 
    J'_{\vv y_i \FR \vv y_k} = J_{\vv y_i \FR \vv y_k} +  J_{\vv y^* \FR \vv y_k} \frac{ J_{\vv y_i \FR \vv y^*} }{\sum_{\vv y_j \to \vv y^* \in G} J_{\vv y_j \FR \vv y^*}}
    }
    for any $\vv y_i$ such that $\vv y_i \to \vv y^* \in G$ and any $\vv y_k$ such that $\vv y^* \to \vv y_k \in G$, and $J'_{\vv y \FR \vv y'} = J_{\vv y \FR \vv y'}$ for all other edges $\vv y \to \vv y' $. 
\end{thm}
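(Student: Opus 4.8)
The plan is to reduce Theorem~\ref{thm:NoNewNodesnD} to repeated application of Lemma~\ref{lem:nDbasecase} by splitting the virtual source $\vv y^*$ into many ``copies,'' one for each incoming edge, in a way that preserves both flux equivalence and all potentials. Let $\vv y_1 \to \vv y^*, \dots, \vv y_m \to \vv y^* \in G$ be the incoming edges with fluxes $J_{\vv y_i \to \vv y^*}$, and let $\vv y^* \to \vv z_1, \dots, \vv y^* \to \vv z_M \in G$ be the outgoing edges. Since $(G,\vv J)$ is complex-balanced, $P_{(G,\vv J)}(\vv y^*)=0$, i.e. $\sum_{i=1}^m J_{\vv y_i \to \vv y^*} = \sum_{k=1}^M J_{\vv y^* \to \vv z_k} =: T$. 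First I would construct an intermediate flux system $(\hat G, \hat{\vv J})$ in which $\vv y^*$ is replaced by $m$ vertices $\vv y^{*(1)}, \dots, \vv y^{*(m)}$ all sitting at the same point $\vv y^* \in \rr^n$ --- here it is cleanest to work with an abstract graph that happens to have a repeated embedding, or equivalently to argue one incoming edge at a time. For each $i$, the vertex $\vv y^{*(i)}$ receives the single edge $\vv y_i \to \vv y^{*(i)}$ with flux $J_{\vv y_i \to \vv y^*}$, and emits edges $\vv y^{*(i)} \to \vv z_k$ with flux $J_{\vv y^* \to \vv z_k}\,\dfrac{J_{\vv y_i \to \vv y^*}}{T}$ for each $k$. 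Because $\vv y^*$ was a virtual source, each $\vv y^{*(i)}$ is also a virtual source (its outgoing reaction vectors are the same as $\vv y^*$'s, scaled by $J_{\vv y_i \to \vv y^*}/T$), and each $\vv y^{*(i)}$ has potential $0$. One checks that summing over $i$ recovers exactly the original fluxes into and out of $\vv y^*$, so $(\hat G, \hat{\vv J}) \sim (G, \vv J)$ and all potentials at the original vertices $\vv y_i, \vv z_k$ are unchanged.

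Next I would apply Lemma~\ref{lem:nDbasecase} to each $\vv y^{*(i)}$ in turn: the subgraph consisting of $\vv y_i \to \vv y^{*(i)}$ and $\vv y^{*(i)} \to \vv z_k$ ($k=1,\dots,M$) is exactly the configuration of that lemma (with $\vv z := \vv y_i$, $\vv y^* := \vv y^{*(i)}$, $\vv y_j := \vv z_k$), its virtual-source hypothesis and zero-potential hypothesis hold, and the other edges of $\hat G$ don't touch $\vv y^{*(i)}$. The lemma removes $\vv y^{*(i)}$ and the edges $\vv y_i \to \vv y^{*(i)}$, $\vv y^{*(i)} \to \vv z_k$, replacing them by edges $\vv y_i \to \vv z_k$ carrying flux $\hat J_{\vv y^{*(i)} \to \vv z_k} = J_{\vv y^* \to \vv z_k} J_{\vv y_i \to \vv y^*}/T$, while preserving flux equivalence and all remaining potentials. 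Performing this for $i=1,\dots,m$ eliminates every copy of $\vv y^*$ and yields a flux system $(G', \vv J')$ with $V_{G'} = V \setminus \{\vv y^*\}$. The new edge $\vv y_i \to \vv z_k$ may coincide with an already-present edge of $G$, in which case its flux adds; collecting these contributions gives precisely
\[
J'_{\vv y_i \to \vv z_k} = J_{\vv y_i \to \vv z_k} + J_{\vv y^* \to \vv z_k}\,\frac{J_{\vv y_i \to \vv y^*}}{\sum_{\vv y_j \to \vv y^* \in G} J_{\vv y_j \to \vv y^*}},
\]
with all other fluxes unchanged, matching the stated formula. Since every step preserved flux equivalence and every potential, $(G', \vv J')$ is complex-balanced and equivalent to $(G,\vv J)$; by Lemma~\ref{lem:CBFWR} it is in particular weakly reversible, though that is automatic from complex-balancedness.

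Alternatively --- and this is probably the slicker writeup --- I would skip the intermediate graph and run a direct induction on $m$, the number of edges into $\vv y^*$. If $m=1$ this is literally Lemma~\ref{lem:nDbasecase}. For the inductive step, pick one incoming edge $\vv y_1 \to \vv y^*$ and ``peel off'' the fraction $J_{\vv y_1 \to \vv y^*}/T$ of all the outgoing fluxes: remove $\vv y_1 \to \vv y^*$ and reduce each $J_{\vv y^* \to \vv z_k}$ to $J_{\vv y^* \to \vv z_k}(1 - J_{\vv y_1\to\vv y^*}/T)$, adding edges $\vv y_1 \to \vv z_k$ with the peeled-off flux. The resulting system still has $\vv y^*$ as a virtual source with zero potential and now only $m-1$ incoming edges, so induction finishes the job; one then has to verify that the cumulative effect of the peeling steps is the closed-form expression above. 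The main obstacle I anticipate is purely bookkeeping rather than conceptual: making sure the fractions are chosen so that after all $m$ removals the outgoing fluxes of $\vv y^*$ have been exactly exhausted and the virtual-source / zero-potential hypotheses survive each intermediate step, and then confirming that adding fluxes onto pre-existing edges $\vv y_i \to \vv z_k$ doesn't disturb flux equivalence (it doesn't, since flux equivalence only sees the weighted sum of reaction vectors at each source vertex, which is additive in the fluxes). There is also a small degenerate case to handle, as in the remark after Lemma~\ref{lem:nDbasecase}: some $\vv z_k$ may equal some $\vv y_i$, or $\vv z_k = \vv y^*$ is excluded since there are no self-loops, so the new edges $\vv y_i \to \vv z_k$ are genuine edges; when $\vv y_i = \vv z_k$ the ``edge'' is a self-loop and its contribution to every equation is zero, so it can simply be discarded.
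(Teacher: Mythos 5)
Your proposal is correct and takes essentially the same route as the paper: the paper's proof is precisely your second variant, an induction on the number of edges into $\vv y^*$ that peels off the fraction $\theta = J_{\vv z_1 \to \vv y^*}/\sum_i J_{\vv z_i \to \vv y^*}$ of the outgoing fluxes via the construction of Lemma~\ref{lem:nDbasecase}, verifying at each step that flux equivalence, the potentials, and the virtual-source property are preserved (and handling the possible coincidence $\vv y_i = \vv z_k$ exactly as you do). Your first variant, splitting $\vv y^*$ into one copy per incoming edge, is only a cosmetic repackaging that produces the closed-form fluxes in one pass, though it sits slightly uneasily with Definition~\ref{def:crn}, where vertices are points of $\rr^n$ and hence cannot be duplicated --- a technicality you already flag and which the inductive version avoids.
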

	\begin{proof}
	Let $N$ be the number of reactions with $\vv y^*$ as target, i.e., $N = |\{ \vv z \FR \vv y^* \in G \}|$. Enumerate the sources as $\vv z_1,\vv z_2, \dots, \vv z_N$. Let $M$ be the number of reactions with $\vv y^*$ as sources, i.e., $M = |\{ \vv y^* \FR \vv y \in G \}|$. Enumerate the targets as $\vv y_1, \vv y_2, \dots, \vv y_M$. Since $\vv y^*$ is a virtual source, it is in the relative interior of the convex hull of the targets $\vv y_j$. From complex balancing, we have $P_{(G,\vv J)}(\vv y^*) = 0$, or 
	\eq{
		\sum_{j=1}^M J_{\vv y^* \FR \vv y_j} 
		= \sum_{i=1}^N J_{\vv z_i \FR \vv y^*}.
	} 

	Let $\theta =\frac{J_{\vv z_1 \FR \vv y^*}}{\sum_i  J_{\vv z_i \FR \vv y^*}}$ be the fraction of flux to be redirected from $\vv z_1 \FR \vv y^*$. We apply the construction described in Lemma~\ref{lem:nDbasecase} to the incoming edge $\vv z_1 \FR \vv y^*$, and the outgoing edges $\vv y^* \FR \vv y_j$ for $j =1,2,\dots, M$. Let $(G', \vv J')$ denote the flux system after the diversion. More precisely, $J'_{\vv z_1 \FR \vv y^*} = 0$,  
	    \eq{
	        J'_{\vv z_1 \FR \vv y_j} - J_{\vv z_1 \FR \vv y_j} &= \theta J_{\vv y^* \FR \vv y_j},
	        \\
	        J'_{\vv y^* \FR \vv y_j} - J_{\vv y^* \FR \vv y_j} &= -\theta J_{\vv y^* \FR \vv y_j},
	    }
	and the fluxes on all other edges unchanged from $\vv J$. 
	
	Checking for flux equivalence at $\vv z_1$ before and after the diversion, we see that
	    \eq{
	       &\left( \text{Final flow from $\vv z_1$} \right) - \left( \text{Initial flow from $\vv z_1$} \right)
	       \\&= \sum_{j=1}^M J_{\vv z_1 \to \vv y_j}(\vv y_j - \vv z_1)
	        - J_{\vv z_1 \FR \vv y^*} (\vv y^* - \vv z_1)
	       \\&= \theta 
	            \underbrace{
	                \sum_{j=1}^M J_{\vv y^* \FR \vv y_j} (\vv y_j - \vv y^*) 
	           }_{=\,\, \vv 0}
	            + \theta 
	                \underbrace{ 
	                    \sum_{j=1}^M  J_{\vv y^* \FR \vv y_j} 
	               }_{=\,\, \sum_{j=1}^N J_{\vv z_i \FR \vv y^*} }     
	               (\vv y^* - \vv z_1)  
	            - J_{\vv z_1 \FR \vv y^*} (\vv y^* - \vv z_1)
	       \\&= \vv 0.
	    }
    At all other vertices, the net flux is unchanged. 

	In terms of potentials, at $\vv z_1$, we have
	    \eq{
	        P_{(G',\vv J')}(\vv z_1) - P_{(G, \vv J)}(\vv z_1)
	        &= - \sum_{j=1}^M J'_{\vv z_1 \FR \vv y_j} + J_{\vv z_1 \FR \vv y^*}
	        = -  \theta \sum_{i=1}^N J_{\vv z_i \FR \vv y^*} + J_{\vv z_1 \FR \vv y^*} 
	        = 0.
	    }
	At each $\vv y_j$: 
	    \eq{
	        P_{(G',\vv J')}(\vv y_j) - P_{(G,\vv J)}(\vv y_j)
	        &= \left( J'_{\vv z_1 \FR \vv y_j} 
	                + J'_{\vv y^* \FR \vv y_j}
	            \right)
	            - \left(
	                J_{\vv z_1 \FR \vv y_j}
	                + J_{\vv y^* \FR \vv y_j} 
	            \right)
	        = 0.
	    }
	At $\vv y^*$: 
		\eq{
			P_{(G',\vv J')}(\vv y^*) - P_{(G,\vv J)}(\vv y^*)
	        = - J_{\vv z_1 \to \vv y^*} +\theta  \sum_{j=1}^M J_{\vv y^* \to \vv y_j} 
	        = 0.
		}
	
	The new flux system $(G', \vv J')$ after diverting the flux from $\vv z_1 \FR \vv y^*$ is still complex-balanced, as the potential is unchanged from those of $(G, \vv J)$. Moreover, $(G', \vv J')$ and $(G, \vv J)$ are flux equivalent. In addition, at $\vv y^*$, we have
	    \eq{
	        \sum_{\vv y^* \FR \vv y \in G'} J'_{\vv y^* \FR \vv y} (\vv y - \vv y^*) 
	       &= (1+\theta)\!\! \sum_{\vv y^* \FR \vv y \in G'}
	            J_{\vv y^* \FR \vv y} 
	        (\vv y - \vv y^*) 
	        = \vv 0,
	    }
	i.e, $\vv y^*$ is a virtual source for $(G', \vv J')$. 
	
	Thus we have recovered all the hypotheses stated in the theorem. The only difference between $(G, \vv J)$ and $(G', \vv J')$ is that $G'$ contains $N-1 = | \left\{ \vv z \FR \vv y^* \in G'\right\} |$ reactions with $\vv y^*$ as target vertex. By induction on the number $| \left\{ \vv z \FR \vv y^* \in G'\right\} |$, there exists a flux system $(G^*, \vv J^*)$ that is flux equivalent to $(G, \vv J)$, and for which $\vv J^*$ is a complex-balanced flux on $G^*$. Finally, because $P_{(G^*,\vv J^*)}(\vv y^*) = 0$, but there are no incoming reactions to $\vv y^*$, it follows that  there are no outgoing reactions from $\vv y^*$, i.e., $\vv y^* \not\in V_{G^*}$. 
	\end{proof}

When does a flux system (or a reaction network) admit a complex-balanced realization?  Theorem~\ref{thm:NoNewNodesnD} implies that virtual sources do not need to be considered. Theorem~\ref{thm:main1} below is the basis behind several relevant numerical methods in Section~\ref{sec:numerical} for determining if a flux system (or a reaction network) is equivalent to complex-balanced.

\begin{thm}
\label{thm:main1}
    Let $(G, \vv J)$ be a flux system, and $V_{G,s}$ its set of source vertices. Then $(G,\vv J)$ is flux equivalent to some complex-balanced flux system if and only if $(G,\vv J)$ is flux equivalent to some complex-balanced flux system $(G', \vv J')$ where $V_{G'} \subseteq V_{G,s}$.
\end{thm}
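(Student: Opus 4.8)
The ``if'' direction is trivial: a complex-balanced flux system $(G',\vv J')$ with $V_{G'}\subseteq V_{G,s}$ is in particular \emph{some} complex-balanced flux system to which $(G,\vv J)$ is flux equivalent. So the content is the forward implication. The plan is to start from an arbitrary complex-balanced realization $(G',\vv J')$ of $(G,\vv J)$ and, using Theorem~\ref{thm:NoNewNodesnD}, delete the vertices of $G'$ that are not source vertices of $G$ one at a time, each deletion preserving both flux equivalence with $(G,\vv J)$ and the complex-balanced property, until none remain.

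The crux is the observation that any such ``unwanted'' vertex is automatically a virtual source, which is exactly what is needed to invoke Theorem~\ref{thm:NoNewNodesnD}. Indeed, let $\vv y_0\in V_{G'}\setminus V_{G,s}$. Since $\vv J'$ is a complex-balanced flux, Lemma~\ref{lem:CBFWR} gives that $G'$ is weakly reversible, so $\vv y_0$ lies on an oriented cycle of $G'$ and is therefore a source vertex of $G'$. (An isolated vertex of $G'$, if present, may simply be discarded, as it appears in no flux-equivalence identity and has zero potential.) On the other hand, since $\vv y_0\notin V_{G,s}$, the $G$-side sum $\sum_{\vv y_0\to\vv y\in G}J_{\vv y_0\to\vv y}(\vv y-\vv y_0)$ of identity~(\ref{eq:FE}) at the vertex $\vv y_0$ is empty, hence $\vv 0$; the identity then forces $\sum_{\vv y_0\to\vv y'\in G'}J'_{\vv y_0\to\vv y'}(\vv y'-\vv y_0)=\vv 0$, i.e.\ $\vv y_0$ is a virtual source of $(G',\vv J')$.

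Granting this, I would run an induction on $|V_{G'}\setminus V_{G,s}|$. If the set is empty we already have $V_{G'}\subseteq V_{G,s}$. Otherwise pick $\vv y_0\in V_{G'}\setminus V_{G,s}$; it is a virtual source of the complex-balanced flux system $(G',\vv J')$, so Theorem~\ref{thm:NoNewNodesnD} produces a complex-balanced flux system $(G'',\vv J'')$ with $V_{G''}=V_{G'}\setminus\{\vv y_0\}$ that is flux equivalent to $(G',\vv J')$, hence (by transitivity of flux equivalence) to $(G,\vv J)$. No new vertices are introduced, so $|V_{G''}\setminus V_{G,s}|=|V_{G'}\setminus V_{G,s}|-1$ and the inductive hypothesis applied to $(G'',\vv J'')$ finishes the proof; the recursion terminates because the vertex set is finite.

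I expect virtually all of the real difficulty to be absorbed into Theorem~\ref{thm:NoNewNodesnD}. The only point in the present argument that requires a moment's care is checking that the hypotheses of that theorem are genuinely re-established after each deletion --- that a vertex of $V_{G''}\setminus V_{G,s}$ is again a virtual source of the \emph{new} system $(G'',\vv J'')$. But this follows from the identical pair of facts used in the first step (weak reversibility of the complex-balanced $G''$, together with the flux-equivalence identity with $(G,\vv J)$ at that vertex having an empty $G$-side), so the induction closes with no extra work.
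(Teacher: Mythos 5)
Your proposal is correct and follows essentially the same route as the paper: use the flux-equivalence identity~(\ref{eq:FE}) at a vertex outside $V_{G,s}$ (whose $G$-side is an empty sum) to conclude it is a virtual source of the complex-balanced realization, then apply Theorem~\ref{thm:NoNewNodesnD} repeatedly, with transitivity of flux equivalence, until only source vertices of $G$ remain. Your extra checks (that such a vertex is genuinely a source of $G'$ via weak reversibility from Lemma~\ref{lem:CBFWR}, and that the hypotheses re-establish themselves after each deletion) are fine refinements of the same argument.
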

   \begin{proof}
	    One direction is trivial. To prove the other direction, suppose $(G,\vv J)$ is a flux system that is flux equivalent to some complex-balanced flux system $(\tilde G, \tilde{\vv J})$. If $\vv y^* \in V_{\tilde G} \setminus V_{G, s}$, the set $\{ \vv y^* \to \vv y \in G \}$ is empty; flux equivalent demands that
	        \eq{
	            \vv 0 = \sum_{\vv y^* \to \vv y \in {\tilde G} } \tilde J_{\vv y^* \to \vv y} (\vv y - \vv y^*).
	        }
	    Theorem~\ref{thm:NoNewNodesnD} implies we can maintain flux equivalence and complex balance even after dropping the vertex $\vv y^*$ from $V_{\tilde G}$. Repeating this process for all vertices not in $V_{G,s}$ ultimately implies that there is a complex-balanced flux system $(G', \vv J')$ such that $(G', \vv J') \sim (G, \vv J)$ and in addition $V_{G'} \subseteq V_{G,s}$.
	\end{proof}

\begin{thm}
\label{thm:main2}
    Let $G$ be a reaction network, and $V_{G,s}$ its set of source vertices. Then the following are equivalent:
    \begin{enumerate}[label=(\roman*)]
    \item 
        There exists a flux vector $\vv J$ such that $(G,\vv J)$ is flux equivalent to some complex-balanced flux system.
    \item
        There exists a flux vector $\vv J$ such that $(G,\vv J)$ is flux equivalent to some complex-balanced flux system $(G', \vv J')$, where $V_{G'} \subseteq V_{G,s}$.
    \end{enumerate}
\end{thm}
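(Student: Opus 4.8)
The plan is to obtain Theorem~\ref{thm:main2} as an immediate consequence of Theorem~\ref{thm:main1}, essentially by quantifying the flux vector. The implication (ii) $\Rightarrow$ (i) is trivial: a flux vector $\vv J$ witnessing (ii) also witnesses (i), since a complex-balanced flux system $(G', \vv J')$ with $V_{G'} \subseteq V_{G,s}$ is in particular a complex-balanced flux system, with no extra condition on it.

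For (i) $\Rightarrow$ (ii), I would start from a flux vector $\vv J$ on $G$ such that $(G, \vv J)$ is flux equivalent to \emph{some} complex-balanced flux system, and then apply Theorem~\ref{thm:main1} directly to this particular flux system $(G, \vv J)$. Theorem~\ref{thm:main1} asserts that $(G, \vv J)$ is then flux equivalent to a complex-balanced flux system $(G', \vv J')$ with $V_{G'} \subseteq V_{G,s}$. That is exactly statement (ii), and the \emph{same} flux vector $\vv J$ on $G$ works; no modification of $\vv J$ is needed.

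The one point to verify is that the object $V_{G,s}$ appearing in Theorem~\ref{thm:main1} is the same as the one in Theorem~\ref{thm:main2}: in both places it is the set of source vertices of $G$, which is determined by $G$ alone and does not depend on any choice of flux vector. Hence there is no circularity and no hidden dependence on $\vv J$. I do not expect any genuine obstacle in this argument; all the substantive work (the flux-redirection construction that removes a virtual source while preserving flux equivalence and complex balancing) has already been carried out in Theorem~\ref{thm:NoNewNodesnD} and packaged into Theorem~\ref{thm:main1}. Theorem~\ref{thm:main2} simply restates Theorem~\ref{thm:main1} at the level of reaction networks, with the flux vector existentially quantified rather than fixed in advance, which is the form most directly usable for the feasibility problems of Section~\ref{sec:numerical}.
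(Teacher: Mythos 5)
Your proposal is correct and matches the paper's own argument, which likewise derives Theorem~\ref{thm:main2} immediately from Theorem~\ref{thm:main1} by applying it to the given flux system $(G,\vv J)$ and noting that (ii) $\Rightarrow$ (i) is trivial. Nothing further is needed.
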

	\begin{proof}
	    The proof follows immediately from Theorem~\ref{thm:main1}.
	\end{proof}

\begin{thm}
    \label{thm:NoNewNodeCBMAS}
        A mass-action system $G_{\vv \kk}$ is dynamically equivalent to some complex-balanced  system if and only if it is dynamically equivalent to a complex-balanced  system $G'_{\vv \kk'}$ that only uses the source vertices, i.e., $V_{G'} \subseteq V_{G,s}$.
\end{thm}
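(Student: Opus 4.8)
The plan is to reduce Theorem~\ref{thm:NoNewNodeCBMAS} to the corresponding flux-system statement, Theorem~\ref{thm:main1}, using the dictionary between mass-action systems and flux systems established in Section~\ref{sec:FluxesCRN}. The nontrivial direction is: if $G_{\vv\kk}$ is dynamically equivalent to \emph{some} complex-balanced mass-action system, then it is dynamically equivalent to one whose vertex set lies in $V_{G,s}$. The reverse direction is immediate, since a complex-balanced system using only source vertices of $G$ is in particular a complex-balanced system.

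First I would fix the "witness" concentration. Suppose $G_{\vv\kk}$ is dynamically equivalent to a complex-balanced mass-action system $\tilde G_{\tilde{\vv\kk}}$, and let $\vv x_0 \in \rrpp^n$ be a complex-balanced steady state of $\tilde G_{\tilde{\vv\kk}}$. Define flux vectors by evaluating the rate functions at $\vv x_0$: $J_{\vv y\FR\vv y'} = \kk_{\vv y\FR\vv y'}\vv x_0^{\vv y}$ on $G$, and $\tilde J_{\vv y\FR\vv y'} = \tilde\kk_{\vv y\FR\vv y'}\vv x_0^{\vv y}$ on $\tilde G$. By Proposition~\ref{prop:DEMAS}, dynamical equivalence of $G_{\vv\kk}$ and $\tilde G_{\tilde{\vv\kk}}$ gives flux equivalence $(G,\vv J)\sim(\tilde G,\tilde{\vv J})$; by Lemma~\ref{lem:FluxSS}(3), $\tilde{\vv J}$ is a complex-balanced flux on $\tilde G$ because $\vv x_0$ is a complex-balanced steady state. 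Hence $(G,\vv J)$ is flux equivalent to a complex-balanced flux system.

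Next I would invoke Theorem~\ref{thm:main1}: since $(G,\vv J)$ is flux equivalent to some complex-balanced flux system, it is flux equivalent to a complex-balanced flux system $(G',\vv J')$ with $V_{G'}\subseteq V_{G,s}$. Now I would pull this back to a mass-action system via Proposition~\ref{prop:DEtool}, applied with the same base point $\vv x_0$: because $(G,\vv J)$ (with $\vv J$ defined from $\vv\kk$ and $\vv x_0$) is flux equivalent to the complex-balanced flux system $(G',\vv J')$, there is a mass-action system $G'_{\vv\kk'}$ (with $\kk'_{\vv y\FR\vv y'} = J'_{\vv y\FR\vv y'}/\vv x_0^{\vv y}$) that is dynamically equivalent to $G_{\vv\kk}$ and for which $\vv x_0$ is a complex-balanced steady state. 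In particular $G'_{\vv\kk'}$ is a complex-balanced mass-action system with $V_{G'}\subseteq V_{G,s}$, as required.

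I do not expect a genuine obstacle here: the theorem is essentially a corollary obtained by threading together Proposition~\ref{prop:DEMAS}, Lemma~\ref{lem:FluxSS}, Theorem~\ref{thm:main1}, and Proposition~\ref{prop:DEtool}. The one point that needs a little care is bookkeeping about the base point $\vv x_0$: the flux system attached to $G_{\vv\kk}$ in Proposition~\ref{prop:DEtool} must be the \emph{same} flux vector $\vv J = (\kk_{\vv y\FR\vv y'}\vv x_0^{\vv y})$ used when invoking flux equivalence, so one must make sure the $\vv x_0$ chosen as a complex-balanced steady state of $\tilde G_{\tilde{\vv\kk}}$ is used consistently throughout; since $\vv x_0 \in \rrpp^n$ is arbitrary from the flux-system point of view, there is no conflict. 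A secondary (entirely routine) point is that $V_{G,s} \subseteq V_G$, so "uses only source vertices of $G$" indeed implies "uses only vertices of $G$", reconciling this statement with the version stated in the introduction.
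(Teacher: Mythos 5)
Your proposal is correct and follows the paper's own argument essentially verbatim: both prove the nontrivial direction by converting to flux systems at a chosen state via Proposition~\ref{prop:DEMAS} (with Lemma~\ref{lem:FluxSS} supplying complex balance of the flux), invoking Theorem~\ref{thm:main1}, and converting back with Proposition~\ref{prop:DEtool}. Your explicit bookkeeping with the complex-balanced steady state $\vv x_0$ of $\tilde G_{\tilde{\vv\kk}}$ just makes precise what the paper calls ``the appropriate fluxes.''
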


	\begin{proof}
	    This theorem follows from Proposition~\ref{prop:DEMAS} and  Theorem~\ref{thm:main1}. Suppose $G_\vv\kk$ is dynamically equivalent to some complex-balanced mass-action system $\tilde{ G}_{\tilde{\vv\kk}}$. Define the appropriate fluxes $\vv J$ on $G$ and $\tilde{\vv J}$ on $\tilde{G}$; by Proposition~\ref{prop:DEMAS}, the two flux systems are flux equivalent. Theorem~\ref{thm:main1} holds if and only if $(G, \vv J)$ is flux equivalent to some complex-balanced flux system $(G', \vv J')$ where $V_{G'} \subseteq V_{G,s}$. Define the appropriate mass-action system $G'_{\vv\kk'}$ (see Proposition~\ref{prop:DEtool}); we have one direction of this theorem. The other direction is trivially true.  
	\end{proof}

All of our theorems thus far have been concerned with flux systems; in the case of mass-action systems, implicit in everything is the existence of a complex-balanced steady state. However, the idea of redirecting fluxes can be adapted to show the surprising result that weak reversibility can be accomplished (if at all) with no extra vertices.

\begin{thm}
\label{thm:NoNewNodeWR}
	A mass-action system $G_{\vv \kk}$ is dynamically equivalent to some weakly reversible mass-action system if and only if it is dynamically equivalent to a weakly reversible mass-action system $G'_{\vv \kk'}$ that only uses its source vertices, i.e., $V_{G'} \subseteq V_{G,s}$.
\end{thm}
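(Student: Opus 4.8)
The plan is to mirror the proof of Theorem~\ref{thm:NoNewNodeCBMAS}, but to carry along the invariant ``weakly reversible'' in place of ``complex-balanced''. The starting point is the same reduction to flux systems: if $G_{\vv\kk}$ is dynamically equivalent to a weakly reversible mass-action system $\tilde G_{\tilde{\vv\kk}}$, fix any $\vv x_0\in\rrpp^n$ and put $J_{\vv y\FR\vv y'}=\kk_{\vv y\FR\vv y'}\vv x_0^{\vv y}$, $\tilde J_{\vv y\FR\vv y'}=\tilde\kk_{\vv y\FR\vv y'}\vv x_0^{\vv y}$; by Proposition~\ref{prop:DEMAS}, $(G,\vv J)\sim(\tilde G,\tilde{\vv J})$. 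It then suffices to prove the flux-system statement: a weakly reversible flux system flux equivalent to $(G,\vv J)$ can be replaced by a weakly reversible flux system $(G',\vv J')\sim(G,\vv J)$ with $V_{G'}\subseteq V_{G,s}$; converting back via $\kk'_{\vv y\FR\vv y'}=J'_{\vv y\FR\vv y'}/\vv x_0^{\vv y}$ and applying Proposition~\ref{prop:DEMAS} once more yields the theorem. As in the complex-balanced case, any $\vv y^*\in V_{\tilde G}\setminus V_{G,s}$ is automatically a virtual source of $(\tilde G,\tilde{\vv J})$: since $\vv y^*$ is the source of no reaction of $G$, the $G$-side of the flux-equivalence equation~(\ref{eq:FE}) at $\vv y^*$ is an empty sum, forcing $\sum_{\vv y^*\FR\vv y'\in\tilde G}\tilde J_{\vv y^*\FR\vv y'}(\vv y'-\vv y^*)=\vv 0$ (weak reversibility of $\tilde G$ ensures $\vv y^*\in V_{\tilde G,s}$, so this makes sense; any genuinely isolated vertices may simply be discarded).

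The technical heart is a weakly reversible analogue of Theorem~\ref{thm:NoNewNodesnD}: if $(H,\vv I)$ is a weakly reversible flux system and $\vv y^*\in V_H$ is a virtual source, then there is a flux equivalent weakly reversible $(H',\vv I')$ with $V_{H'}=V_H\setminus\{\vv y^*\}$. I would prove this with the same ``redirecting fluxes'' idea as in Lemma~\ref{lem:nDbasecase} and Theorem~\ref{thm:NoNewNodesnD}, but with weights proportional to the \emph{outgoing} fluxes of $\vv y^*$. Writing the reactions into $\vv y^*$ as $\vv z_i\FR\vv y^*$ ($i=1,\dots,N$) and those out of $\vv y^*$ as $\vv y^*\FR\vv y_j$ ($j=1,\dots,M$), set $\mu_j=\frac{I_{\vv y^*\FR\vv y_j}}{\sum_{l} I_{\vv y^*\FR\vv y_l}}$; the virtual-source condition says exactly that $\vv y^*=\sum_j\mu_j\vv y_j$ lies in the relative interior of $\operatorname{co}\{\vv y_j\}$ (so $M\ge 2$, since $H$ has no self-loops). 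Delete every edge incident to $\vv y^*$ and, for each pair $(i,j)$, add the bypass edge $\vv z_i\FR\vv y_j$ with $I'_{\vv z_i\FR\vv y_j}=I_{\vv z_i\FR\vv y_j}+I_{\vv z_i\FR\vv y^*}\,\mu_j$ (omitting it when $\vv z_i=\vv y_j$, since such a self-loop carries the zero reaction vector and is anyway forbidden). Flux equivalence is then routine: at $\vv y^*$ both sides vanish; at each $\vv z_i$ the added terms $\sum_j I_{\vv z_i\FR\vv y^*}\mu_j(\vv y_j-\vv z_i)=I_{\vv z_i\FR\vv y^*}(\vv y^*-\vv z_i)$ exactly compensate the deleted edge $\vv z_i\FR\vv y^*$; and at every other vertex the outgoing reaction vectors are untouched. (If $(H,\vv I)$ happens to be complex-balanced, the potential at $\vv y^*$ vanishes, $\sum_i I_{\vv z_i\FR\vv y^*}=\sum_j I_{\vv y^*\FR\vv y_j}$, and these formulas reduce to those of Theorem~\ref{thm:NoNewNodesnD}.)

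The step I expect to be the main obstacle is showing that the new graph $H'$ is still weakly reversible --- a purely combinatorial point with no counterpart in the complex-balanced argument, where everything was tracked through potentials. The argument I have in mind is path splicing: $\vv y^*$, the $\vv z_i$, and the $\vv y_j$ all lie in a single strongly connected component $C$ of $H$, and every bypass edge added joins two vertices of $C\setminus\{\vv y^*\}$, so no component outside $C$ is affected and no new component is created. For strong connectedness of $C\setminus\{\vv y^*\}$ in $H'$: given $u,v\in C\setminus\{\vv y^*\}$, take a directed walk $u\pathto v$ in $H$ and replace each maximal subwalk of the form $\vv z_i\FR\vv y^*\FR\vv y_j$ by the bypass edge $\vv z_i\FR\vv y_j$ (or, when $\vv z_i=\vv y_j$, simply excise the detour through $\vv y^*$); this produces a directed walk $u\pathto v$ in $H'$, so $C\setminus\{\vv y^*\}$ is strongly connected and $H'$ is weakly reversible. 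With this lemma, Theorem~\ref{thm:NoNewNodeWR} follows by iteration, exactly as Theorem~\ref{thm:main1} followed from Theorem~\ref{thm:NoNewNodesnD}: starting from $(\tilde G,\tilde{\vv J})$, repeatedly pick a vertex in $V_{\tilde G'}\setminus V_{G,s}$ --- which, by the first paragraph, is always a virtual source of the current (weakly reversible, flux-equivalent-to-$(G,\vv J)$) flux system --- and remove it; the vertex set strictly decreases, so the process stops at a weakly reversible $(G',\vv J')\sim(G,\vv J)$ with $V_{G'}\subseteq V_{G,s}$, and converting back to mass-action as above gives the desired weakly reversible $G'_{\vv\kk'}$ dynamically equivalent to $G_{\vv\kk}$. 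The reverse implication is trivial.
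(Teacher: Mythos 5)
Your proposal is correct and follows essentially the same route as the paper: the same redirection of reactions through the virtual source $\vv y^*$ with weights proportional to its outgoing fluxes (your formula $I'_{\vv z_i\FR\vv y_j}=I_{\vv z_i\FR\vv y_j}+I_{\vv z_i\FR\vv y^*}\mu_j$ is exactly the paper's $\kk'_{\vv z_j\to\vv y_i}=\kk_{\vv z_j\to\vv y_i}+\alpha_j\beta_i/\sum_s\beta_s$ after the fixed-state rescaling), followed by iterated removal of all vertices outside $V_{G,s}$. The only differences are cosmetic: you package the construction at the flux level and convert back via Proposition~\ref{prop:DEMAS}, and your path-splicing argument for weak reversibility runs in the direction from walks in the old graph to walks in the new one, which is if anything a slightly more careful rendering of the paper's cycle-replacement step.
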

	\begin{proof}
	    Without loss of generality, we may suppose that $G_{\vv \kk}$ is a weakly reversible mass-action system for which there exists a virtual source $\vv y^*$. 
        As in Theorem~\ref{thm:NoNewNodesnD}, we remove the vertex $\vv y^*$ by redirecting the reactions flowing through it. Since $G$ is weakly reversible, there exists some vertex $\vv z$ such that $\vv z \to \vv y^* \in  G$. As before, we will try to replace pairs of reactions $\vv z \to \vv y^*$ and $\vv y^* \to \vv y$ with $\vv z \to \vv y$. 
	    
	    Enumerate the set $\{ \vv y^* \to \vv y \in G\}$ as $\{ \vv y^* \to \vv y_i\}_{i=1}^M$, and enumerate the set $\{ \vv z \to \vv y^* \in G\}$ as $\{ \vv z_j \to \vv y^*\}_{j=1}^N$. For simplicity, let $\alpha_j = \kk_{\vv z_j \to \vv y^*}$, and let $\beta_i = \kk_{\vv y^* \to \vv y_i}$. Informally speaking, in place of the reactions $\vv z_j \to \vv y^*$ and $\vv y^* \to \vv y_i$, we shall have the reaction $\vv z_j \to \vv y_i$ with rate constant $\kk'_{\vv z_j \to \vv y_i} = \alpha_j  \frac{ \beta_i}{\sum \beta_s} $. More precisely, let $G'$ be the graph after deleting the vertex $\vv y^*$ and its adjacent edges from $G$, and (if needed) the edges $\vv z_j \to \vv y_i$ added for all $i=1,2,\dots, M$ and $j=1,2,\dots, N$. On $G'$, take the rate constants to be $\kk'_{\vv z_j \to \vv y^* }  = \kk'_{\vv y^* \to \vv y_i } = 0$ and 
	        \eq{
	            \kk'_{\vv z_j \to \vv y_i }= \kk_{\vv z_j \to \vv y_i} + \alpha_j \frac{ \beta_i}{\sum \beta_s} ,
	        }
	   and all other rate constants same as in $G_\vv \kk$.

	  The assumption that $\vv y^*$ is a virtual source can be written as
	    \eq{ 
	        \sum_{i = 1}^M \beta_i \vv y_i = \sum_{i=1}^M \beta_i \vv y^*.
	    }
    Now to check for dynamical equivalence at $\vv z_1$, we consider the differences due to the reactions $\vv z_1 \to \vv y_i$:  
	 \eq{ 
	    \sum_{i=1}^M (\kk'_{\vv z_1 \to \vv y_i} - \kk_{\vv z_1 \to \vv y_i}) (\vv y_i - \vv z_1) 
	    &=
	    \sum_{i=1}^M \alpha_1 \frac{ \beta_i}{\sum \beta_s} (\vv y_i - \vv z_1) 
	    \\&= \frac{\alpha_1}{\sum \beta_s} \left( \sum_{i=1}^M \beta_i \vv y^* - \sum_{i=1}^M \beta_i \vv z_1 \right)
	    \\&= \alpha_1 (\vv y^* - \vv z_1),
	 }
    which is the contribution from the reaction $\vv z_1 \to \vv y^*$. Since other reactions were untouched, we have dynamical equivalence at $\vv z_1$.	There is nothing special about $j=1$; the same holds for all source vertices $\vv z_2, \vv z_3,\dots, \vv z_N$. 
	
	Finally, given any cycle $\vv v_1 \to \vv v_2 \to \cdots \to \vv v_\ell \to \vv v_1$ in $G'$, whenever an edge $\vv z_j \to \vv y_i$ appears in the cycle, replace it with two edges $\vv z_j \to \vv y^* \to \vv y_i$, and obtain a cycle in $G$. Therefore, $G'$ is still weakly reversible.
	\end{proof}

We extend the above results (Theorems~\ref{thm:NoNewNodesnD}-\ref{thm:NoNewNodeWR}) to detailed-balanced fluxes and/or reversible networks. We summarize these results in the following theorems.
\begin{thm}
\label{thm:DBF}
    Let $(G, \vv J)$ be a detailed-balanced flux system on a reaction network $G = (V,E)$. Suppose that $\vv y^* \in V$ is a virtual source. 
Then there exists an equivalent detailed-balanced flux system $(G', \vv J')$ with $V_{G'} = V \setminus \{ \vv y^*\}$. Moreover,  
	\eqn{
        J'_{\vv y_i \to \vv y_k} = J_{\vv y_i \to \vv y_k} + J_{\vv y^* \to \vv y_k} \frac{J_{\vv y_i \to \vv y^*}}{\sum_{\vv y_j \to \vv y^* \in G} J_{\vv y_j \to \vv y^*} } 
    } 
for any $\vv y_i, \vv y_k$ connected to $\vv y^*$ in $(G,\vv J)$. Let other fluxes remain unchanged from $(G, \vv J)$. In particular, $(G, \vv J)$ is flux equivalent to some detailed-balanced flux system if and only if $(G, \vv J)$ is flux equivalent to some detailed-balanced flux system $(G', \vv J')$ where $V_{G'} \subseteq V_{G, s}$. 
\end{thm}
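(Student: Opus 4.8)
The plan is to bootstrap from Theorem~\ref{thm:NoNewNodesnD}, exploiting the fact that its flux-redirection formula is automatically symmetric when the input flux is detailed-balanced. First I would record two consequences of Lemma~\ref{lem:CBFWR}: a detailed-balanced flux is in particular complex-balanced, and a network carrying one is reversible. Reversibility means that $\vv z \to \vv y^* \in G$ if and only if $\vv y^* \to \vv z \in G$, so the in-neighbours and out-neighbours of $\vv y^*$ form one common set $\{\vv y_1,\dots,\vv y_M\}$, and detailed balance forces $\alpha_i := J_{\vv y_i \FR \vv y^*} = J_{\vv y^* \FR \vv y_i}$. Writing $S = \sum_{j=1}^M \alpha_j$, the virtual-source hypothesis becomes $\sum_{i=1}^M \alpha_i (\vv y_i - \vv y^*) = \vv 0$.

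Next I would apply Theorem~\ref{thm:NoNewNodesnD} to the complex-balanced flux system $(G,\vv J)$ with virtual source $\vv y^*$, producing an equivalent complex-balanced $(G',\vv J')$ with $V_{G'} = V \setminus \{\vv y^*\}$ and $J'_{\vv y_i \FR \vv y_k} = J_{\vv y_i \FR \vv y_k} + J_{\vv y^* \FR \vv y_k}\, J_{\vv y_i \FR \vv y^*}/\sum_j J_{\vv y_j \FR \vv y^*}$. Substituting the detailed-balance relations, the correction term is $\alpha_i\alpha_k/S$, which is symmetric in $i \leftrightarrow k$; since $J_{\vv y_i \FR \vv y_k} = J_{\vv y_k \FR \vv y_i}$ on the unchanged edges, we get $J'_{\vv y_i \FR \vv y_k} = J'_{\vv y_k \FR \vv y_i}$, so $\vv J'$ is again detailed-balanced, and $G'$ is again reversible because the new edges are introduced in $i\leftrightarrow k$ pairs while the old reversible edges persist. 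Self-loops $\vv y_i \FR \vv y_i$ nominally produced by the formula are simply discarded: they contribute nothing to flux equivalence, to the potentials, or to detailed balance. Rather than quoting Theorem~\ref{thm:NoNewNodesnD} verbatim I would also re-check flux equivalence directly — only the vertices $\vv y_i$ need attention — by computing the change in the outflow from $\vv y_i$ as $\tfrac{\alpha_i}{S}\sum_{k} \alpha_k(\vv y_k - \vv y_i) - \alpha_i(\vv y^* - \vv y_i)$, then using $\vv y_k - \vv y_i = (\vv y_k - \vv y^*) + (\vv y^* - \vv y_i)$ together with the virtual-source identity to collapse it to $\vv 0$.

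For the ``in particular'' clause, one direction is trivial; for the other I would run the argument of Theorem~\ref{thm:main1}. Given a detailed-balanced realization $(\tilde G, \tilde{\vv J})$ of $(G,\vv J)$, any $\vv y^* \in V_{\tilde G}\setminus V_{G,s}$ has no outgoing edge in $G$, so the flux-equivalence equation at $\vv y^*$ forces $\sum_{\vv y^* \FR \vv y \in \tilde G} \tilde J_{\vv y^* \FR \vv y}(\vv y - \vv y^*) = \vv 0$; hence $\vv y^*$ is a virtual source of $(\tilde G, \tilde{\vv J})$, and the first part deletes it while keeping detailed balance and flux equivalence. Because no new vertices are ever created, iterating over the finitely many vertices outside $V_{G,s}$ terminates at a detailed-balanced $(G',\vv J')$ flux equivalent to $(G,\vv J)$ with $V_{G'} \subseteq V_{G,s}$.

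I do not expect a real obstacle: the substantive work is done in Theorem~\ref{thm:NoNewNodesnD}, and the whole point is the (slightly surprising, but mechanical) observation that its construction preserves detailed balance ``for free'' once one rewrites the correction using $J_{\vv y_i \FR \vv y^*} = J_{\vv y^* \FR \vv y_i}$. The only care needed is in the bookkeeping conventions — the self-loop terms, and the possibility that a neighbour of $\vv y^*$ plays the role of the ``source'' vertex $\vv z$ in Lemma~\ref{lem:nDbasecase}, which the remark following that lemma already permits.
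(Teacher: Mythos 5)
Your proposal is correct and takes essentially the same route as the paper: invoke the construction of Theorem~\ref{thm:NoNewNodesnD} (detailed balance being a special case of complex balance by Lemma~\ref{lem:CBFWR}), and observe that since $J_{\vv y_i \FR \vv y^*} = J_{\vv y^* \FR \vv y_i}$ the correction term $J_{\vv y^* \FR \vv y_k} J_{\vv y_i \FR \vv y^*}/\sum_j J_{\vv y_j \FR \vv y^*}$ is symmetric in $i \leftrightarrow k$, so the new flux is again detailed-balanced, with the ``in particular'' clause following by the iteration argument of Theorem~\ref{thm:main1}. Your extra bookkeeping (discarding self-loops, directly re-verifying flux equivalence at the $\vv y_i$) is a harmless refinement of the same argument.
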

    \begin{proof}
        As in Theorem~\ref{thm:NoNewNodesnD}, we divert fluxes away from $\vv y^*$. We only need to check detail balancing. Consider any two vertices $\vv y_i \neq \vv y_k$ where $\vv y_i \RR \vv y^*, \vv y_k \RR \vv y^* \in G$. Using the fact that the flux system was originally detailed-balanced, i.e., $J_{\vv y \to \vv y'} = J_{\vv y' \to \vv y}$, we obtain
            \eq{
                J'_{\vv y_i \to \vv y_k}
                &= J_{\vv y_i \to \vv y_k} + J_{\vv y^* \to \vv y_k} \frac{J_{\vv y_i \to \vv y^*}}{\sum_{\vv y_j \to \vv y^* \in G} J_{\vv y_j \to \vv y^*} }
                \\&= J_{\vv y_k \to \vv y_i} + J_{\vv y^* \to \vv y_i} \frac{J_{\vv y_k \to \vv y^*}}{\sum_{\vv y_j \to \vv y^* \in G} J_{\vv y_j \to \vv y^*} }
                \\&= J'_{\vv y_k \to \vv y_i}.
            }
For any other pairs of reversible reaction, detail balancing is inherited from $(G, \vv J)$. In other words, $(G',\vv J')$ is detailed-balanced.
    \end{proof}

\begin{thm}
\label{thm:NoNewNodeRev}
    A mass-action system $G_{\vv\kk}$ is dynamically equivalent to some reversible system if and only if it is dynamically equivalent to a reversible system $G'_{\vv \kk'}$ that only uses its source vertices, i.e., $V_{G'} \subseteq V_{G,s}$. 
\end{thm}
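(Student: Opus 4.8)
The plan is to transcribe the proof of Theorem~\ref{thm:NoNewNodeWR} essentially verbatim, adding one short observation that guarantees \emph{reversibility} (not just weak reversibility) is preserved by the vertex-removal construction. The ``if'' direction is trivial. For the ``only if'' direction, suppose $G_{\vv \kk}$ is dynamically equivalent to some reversible mass-action system $\tilde G_{\tilde{\vv\kk}}$. I would argue by induction on the cardinality of $V_{\tilde G}\setminus V_{G,s}$: if this set is empty we are done; otherwise pick $\vv y^*\in V_{\tilde G}\setminus V_{G,s}$ and produce a \emph{reversible} mass-action system $G'_{\vv\kk'}$ with $\vv y^*\notin V_{G'}$, with $V_{G'}=V_{\tilde G}\setminus\{\vv y^*\}$ (no new vertices), and still dynamically equivalent to $G_{\vv\kk}$ by transitivity of dynamical equivalence. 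Since each step removes one element of $V_{\tilde G}\setminus V_{G,s}$ and adds none, after finitely many steps we reach a reversible realization whose vertex set is contained in $V_{G,s}$.

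First I would check that $\vv y^*$ is a virtual source of $\tilde G_{\tilde{\vv\kk}}$. As $\tilde G$ is reversible, $\vv y^*$ is the source of at least one reaction, so $\vv y^*\in V_{\tilde G,s}$. Applying the dynamical-equivalence identity~(\ref{eq:DE}) to $G_{\vv\kk}$ and $\tilde G_{\tilde{\vv\kk}}$ at $\vv y_0=\vv y^*$, and using that $\vv y^*\notin V_{G,s}$ makes the $G$-side an empty sum, yields $\sum_{\vv y^*\to\vv y\in\tilde G}\tilde\kk_{\vv y^*\to\vv y}(\vv y-\vv y^*)=\vv 0$, which is exactly the virtual-source condition. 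Because $\tilde G$ is reversible, the set of in-neighbors of $\vv y^*$ coincides with its set of out-neighbors; enumerate it as $\vv y_1,\dots,\vv y_M$ with $\vv y^*\RR\vv y_i$, and set $\alpha_j=\tilde\kk_{\vv y_j\to\vv y^*}$ and $\beta_i=\tilde\kk_{\vv y^*\to\vv y_i}$.

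Now I would run the construction of Theorem~\ref{thm:NoNewNodeWR}: delete $\vv y^*$ and all edges incident to it, and for every ordered pair $(j,i)$ with $\vv y_j\neq\vv y_i$ add the edge $\vv y_j\to\vv y_i$ with rate constant $\kk'_{\vv y_j\to\vv y_i}=\tilde\kk_{\vv y_j\to\vv y_i}+\alpha_j\,\beta_i/\sum_s\beta_s$ (pairs with $\vv y_j=\vv y_i$ would give the zero reaction vector and are simply discarded, exactly as in the remark after Lemma~\ref{lem:nDbasecase}); all other rate constants are kept. Each such $\kk'_{\vv y_j\to\vv y_i}$ is positive, so $G'_{\vv\kk'}$ is a genuine mass-action system. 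Dynamical equivalence of $G'_{\vv\kk'}$ and $\tilde G_{\tilde{\vv\kk}}$ is checked as in Theorem~\ref{thm:NoNewNodeWR}: at each $\vv y_j$ the net change in the coefficient of the monomial $\vv x^{\vv y_j}$ is $\frac{\alpha_j}{\sum_s\beta_s}\left(\sum_i\beta_i\vv y_i-\left(\sum_s\beta_s\right)\vv y_j\right)=\alpha_j(\vv y^*-\vv y_j)$ by the virtual-source condition, and this exactly cancels the removed reaction $\vv y_j\to\vv y^*$; at $\vv y^*$ itself the $G'$-side coefficient is $\vv 0$ and the deleted $\tilde G$-side term $\sum_i\beta_i(\vv y_i-\vv y^*)$ is $\vv 0$; all other monomials are untouched. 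Combining with $G_{\vv\kk}\sim\tilde G_{\tilde{\vv\kk}}$ gives $G_{\vv\kk}\sim G'_{\vv\kk'}$.

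The only genuinely new point is that $G'$ is \emph{reversible}. Edges of $G'$ not incident to the old vertex $\vv y^*$ are inherited from $\tilde G$, hence occur in reversible pairs; and every edge incident to $\vv y^*$ has been deleted together with its reverse. For a newly added edge $\vv y_j\to\vv y_i$: since $\vv y^*\RR\vv y_k$ for every $k$, the index set of added ordered pairs is symmetric, so $\vv y_i\to\vv y_j$ is added as well; and if either of $\vv y_j\to\vv y_i$, $\vv y_i\to\vv y_j$ was already in $\tilde G$, it is present in $G'$ too. Hence $G'$ is reversible, completing the inductive step. I expect the main (mild) obstacle to be the index bookkeeping around self-loops $\vv y_j=\vv y_i$ and around edges $\vv y_j\to\vv y_i$ that were already present in $\tilde G$; the rest is a direct transcription of the arguments of Theorems~\ref{thm:NoNewNodesnD} and~\ref{thm:NoNewNodeWR}.
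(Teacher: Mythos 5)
Your proposal is correct and takes essentially the same route as the paper: the paper also reduces (without loss of generality) to a reversible system with a virtual source $\vv y^*$, removes it by setting $\kk'_{\vv y_i \to \vv y_j} = \kk_{\vv y_i \to \vv y_j} + \kk_{\vv y_i \to \vv y^*}\,\kk_{\vv y^* \to \vv y_j}/\sum_s \kk_{\vv y^* \to \vv y_s}$ for $\vv y_i \RR \vv y^*$, $\vv y_j \RR \vv y^*$, and invokes the Theorem~\ref{thm:NoNewNodeWR} computation for dynamical equivalence together with the symmetry of the construction for reversibility. Your additional bookkeeping (deriving the virtual-source condition from Equation~(\ref{eq:DE}), the induction on removed vertices, and the self-loop caveat) only makes explicit what the paper leaves implicit.
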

    \begin{proof}
        We assume that $G_\vv\kk$ is reversible and has a virtual source $\vv y^* \in V_{G}$. We will replace the reactions $\{ \vv y^* \RR \vv y_i \in G\}$ by modifying/adding the reactions $\{ \vv y_i \RR \vv y_k \colon \vv y_i \RR \vv y^*, \vv y_k \RR \vv y^* \in G\}$.  For any $\vv y_i$, $\vv y_j$ such that $\vv y_i \RR \vv y^*$, $\vv y_k \RR \vv y^* \in G$, let $\kk'_{\vv y_i \to \vv y^*} = \kk'_{\vv y^* \to \vv y_i} = 0$ and 
        \eq{
        		\kk'_{\vv y_i \to \vv y_j}
        		= \kk_{\vv y_i \to \vv y_j} 
        			+ \kk_{\vv y_i \to \vv y^*} \left( \frac{\kk_{\vv y^* \to \vv y_j} }{\sum \kk_{\vv y^* \to \vv y_s}} \right).
        }
        Similar to Theorem~\ref{thm:NoNewNodeWR}, it can be shown that $G_{\vv\kk}$ and $G'_{\vv\kk'}$ are dynamically equivalent. Moreover, by symmetry of construction, $G'$ is reversible.
    \end{proof}

Note that related results have been obtained recently for the problem of kinetic feedback design involving complex-balanced and weakly reversible systems~\cite{KineticFeedbackDesign}.
Here, for the problem of dynamical equivalence, we show that a given system admits a dynamically equivalent system that is  complex-balanced (or weakly reversible, or detailed-balanced, or reversible) if and only if such a system exists  using {\em only} the complexes that are  already present in the original system.

\subsection{Connection to deficiency theory}

Within the reaction network theory literature, \emph{deficiency} is a well-known quantity defined for a network $G$. Equipped with mass-action kinetics, networks with low deficiency are known to enjoy special dynamical properties under mass-action kinetics. For example, the famous deficiency zero theorem says that a weakly reversible deficiency zero network is complex-balanced for any choices of rate constants~\cite{HJ72, Feinberg_1987}. As we have introduced, complex-balanced systems enjoy properties such as uniqueness and stability of steady states, existence of a Lyapunov function, and the steady states admit a monomial parametrization~\cite{GunaNts, FeinLectNts, RevCY, HJ72, Fein95}. Despite the strong implications, deficiency has a relatively simple definition.

\begin{defn}
Let $G = (V_G,E_G)$ be a reaction network with $\ell_G$ connected components. Suppose the dimension of the stoichiometric subspace $S$ is $s=\dim S$; then the {\df{deficiency}} of the network $G$ is the nonnegative integer 
	\eqn{
		\delta_G = |V_G| - \ell_G - s.
	}
\end{defn}

It can be shown that $\delta_G = \dim(\Ker Y \cap \Img I_G)$, where $Y$ is the stoichiometric matrix, with the vertices as its columns, and $I_G$ the incidence matrix of $G$~\cite{Joh14}. It follows that $\delta_G$ is a nonnegative integer. When the network is weakly reversible, we also have $\delta_G = \dim(\Ker Y \cap \Img A_{\vv\kk})$, where $- A_{\vv\kk}^T$ is the Laplacian of the weighted graph $G_{\vv\kk}$~\cite{GunaNts, FeinLectNts}.

Deficiency continues to play an important role in the analysis of reaction networks and mass-action systems. In our procedure for removing virtual vertices, deficiency always decreases. 
This is similar to a result obtained in \cite{KineticFeedbackDesign}, where the removal of additional monomials that function as controls in a feedback system also leads to a decrease in deficiency.

\begin{thm}
\label{thm:deficiency}
    Let $G_{\vv \kk}$ be a weakly reversible mass-action system with deficiency $\delta_G$. Suppose it has a virtual source $\vv y^*$. 
    Let $G'_{\vv\kk'}$ be the weakly reversible mass-action system as produced in Theorem~\ref{thm:NoNewNodeWR}, dynamically equivalent to $G_{\vv\kk}$ with $V_{G'} = V_{G} \setminus \{\vv y^*\}$. Then the deficiency of $G'_{\vv\kk'}$ is $\delta_{G'} = \delta_G - 1$.
\end{thm}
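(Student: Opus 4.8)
The plan is to evaluate the three terms of the deficiency formula $\delta = |V| - \ell - s$ separately for $G$ and for the network $G'$ produced by Theorem~\ref{thm:NoNewNodeWR}, and to show that exactly one of them drops by $1$. The vertex count is immediate: that construction deletes the single vertex $\vv y^*$ (it may add edges $\vv z_j \to \vv y_i$ among vertices already present, or merely reweight existing edges, but it introduces no new vertex), so $|V_{G'}| = |V_G| - 1$. Since the deficiency does not depend on the rate constants, it remains only to prove that the dimension $s$ of the stoichiometric subspace and the number $\ell$ of linkage classes are unchanged.

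To see $s_{G'} = s_G$, keep the notation of the proof of Theorem~\ref{thm:NoNewNodeWR}: let $\vv z_1,\dots,\vv z_N$ be the sources of the reactions into $\vv y^*$ and $\vv y_1,\dots,\vv y_M$ the targets of those out of $\vv y^*$, with $\beta_i = \kk_{\vv y^* \to \vv y_i}$; weak reversibility forces $N, M \geq 1$. The inclusion $S_{G'} \subseteq S_G$ is clear, since each reaction vector of $G'$ is either a reaction vector of $G$ or, for a new edge $\vv z_j \to \vv y_i$, equals $(\vv y_i - \vv y^*) + (\vv y^* - \vv z_j)$, a sum of two reaction vectors of $G$. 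For the reverse inclusion, the virtual-source condition says $\sum_i \beta_i(\vv y_i - \vv y^*) = \vv 0$, i.e. $\vv y^* = (\sum_i \beta_i \vv y_i)/(\sum_i \beta_i)$, whence for any $\vv z_j$ with $\vv z_j \to \vv y^* \in G$,
\[ \vv y^* - \vv z_j = \frac{1}{\sum_i \beta_i}\sum_{i=1}^M \beta_i (\vv y_i - \vv z_j) \in S_{G'}, \]
because each $\vv y_i - \vv z_j$ is either $\vv 0$ or the reaction vector of the edge $\vv z_j \to \vv y_i$ of $G'$. Then $\vv y_i - \vv y^* = (\vv y_i - \vv z_j) - (\vv y^* - \vv z_j)$ and $\vv z_j - \vv y^* = -(\vv y^* - \vv z_j)$ also lie in $S_{G'}$, so every reaction vector of $G$ does; hence $S_G = S_{G'}$. (This could instead be deduced from Proposition~\ref{prop:DEMAS} together with the coincidence of the kinetic and stoichiometric subspaces for weakly reversible systems~\cite{FH77}, since $G$ and $G'$ are both weakly reversible.)

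For the linkage classes, observe that the modification is confined to the linkage class $C$ of $G$ containing $\vv y^*$: every $\vv z_j$ and every $\vv y_i$ lies in $C$, no edge between distinct linkage classes is created, and all other linkage classes are left intact; so it suffices to check that $C \setminus \{\vv y^*\}$, with the added edges $\vv z_j \to \vv y_i$, remains weakly connected. Given $u, v \in C \setminus \{\vv y^*\}$, take a simple undirected path between them in $C$; if it meets $\vv y^*$, it does so at one interior vertex with exactly two neighbours $a, b$ on the path, each a $\vv z_j$ or a $\vv y_i$. If $a = \vv z_j$ and $b = \vv y_i$, the edge $\vv z_j \to \vv y_i$ of $G'$ bypasses $\vv y^*$; if $a, b$ are both sources (resp. targets) of $\vv y^*$, reroute through a fixed target $\vv y_1$ (resp. source $\vv z_1$), which exists and is joined to $a, b$ by edges of $G'$ since $M, N \geq 1$. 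Hence $\ell_{G'} = \ell_G$, and therefore $\delta_{G'} = (|V_G| - 1) - \ell_G - s_G = \delta_G - 1$. I expect this last, linkage-class bookkeeping to be the only genuine subtlety: one must rule out that excising $\vv y^*$ disconnects its linkage class, and that is precisely where weak reversibility (which guarantees both the in- and out-neighbourhoods of $\vv y^*$ are nonempty) and the added edges $\vv z_j \to \vv y_i$ are used; the remaining steps are routine.
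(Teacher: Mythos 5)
Your proof is correct and takes essentially the same route as the paper: the vertex count drops by one while the stoichiometric subspace and the number of linkage classes are shown to be unchanged, with $S_G \subseteq S_{G'}$ ultimately resting on the virtual-source identity that places $\vv y^*$ in the convex hull of the targets $\vv y_i$. The only notable differences are that you argue $S_G \subseteq S_{G'}$ directly from $\vv y^* - \vv z_j = \tfrac{1}{\sum_i \beta_i}\sum_i \beta_i(\vv y_i - \vv z_j)$ instead of first invoking weak-reversibility cycles as the paper does, and you spell out the linkage-class preservation (rerouting undirected paths through the new edges $\vv z_j \to \vv y_i$), a step the paper asserts without proof.
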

	\begin{proof}
	    In the proof of Theorem~\ref{thm:NoNewNodeWR}, we replaced the reactions $\vv z \to \vv y^*$ and $\vv y^* \to \vv y$ with the reaction $\vv z \to \vv y$ by choosing appropriate rate constants. It is clear that $|V_{G'}| = |V_{G}| - 1$, and the number of linkage classes stays the same. We claim that the stoichiometric subspace $S$ remains unchanged. Thus, the drop in deficiency is due to the removal of the vertex $\vv y^*$, and $\delta_{G'} = \delta_G - 1$.
	    
	    First enumerate the reactions coming out of $\vv y^*$ as $\vv y^* \to \vv y_j$, and enumerate the reactions going into $\vv y^*$ as $\vv z_i \to \vv y^*$. Let $S_0$ be the span of the reaction vectors ``untouched'' by our procedure, more precisely,
	        \eq{ 
	            S_0 = \Span_\rr \{ \vv y \to \vv y' \in G \colon \vv y \neq \vv y^* \text{ or } \vv y' \neq \vv y^*\}.
	        }
	    Let $S_G$ be the stoichiometric subspace of $G$, in particular,
	        \eq{
	            S_G = \Span_\rr \{ S_0, \, \vv y_j - \vv y^*, \, \vv y^* - \vv z_i\}_{i,j},
	        }
        and $S_{G'}$ be the stoichiometric subspace of $G'$, where
	        \eq{
	            S_{G'} = \Span_\rr \{ S_0,\,  \vv y_j - \vv z_i\}_{i,j}.
	        }
	    Clearly, $S_{G'} \subseteq S_G$, since $\vv y_j - \vv z_i = (\vv y_j - \vv y^*) + (\vv y^* -  \vv z_i) \in S_G$. Moreover, because $G$ is weakly reversible, the edge $\vv y^* \to \vv y_j$ is a part of a cycle; therefore, $S_G = \Span_\rr \{ S_0, \, \vv y^* - \vv z_i\}_{i}$. Finally, we note that $\vv y^*$ is in the convex hull of the vertices $\vv y_j$, and thus $\vv y^* - \vv z_i  \in \Span_\rr \{ \vv y_j - \vv z_i\}_j$, which implies $S_{G} \subseteq S_{G'}$. In other words, $S_G = S_{G'}$ and $\delta_{G'} = \delta_G - 1$.
	\end{proof}

\section{Numerical methods}
\label{sec:numerical}

In this section, we characterize when a flux system or a mass-action system is equivalent to a complex-balanced system. We also describe a method to determine when a mass-action system is dynamically equivalent to a complex-balanced or weakly reversible system. 

\subsection{Flux equivalence to complex-balancing}

Is a steady state flux system $(G,\vv J)$ flux equivalent to a complex-balanced one? The answer lies in the following linear feasibility problem for an unknown vector $\vv J'$. Enumerate the set of source vertices in $G$ as $\{\vv y_1, \vv y_2,\dots, \vv y_N\}$. Search for $\vv J' = (J'_{\vv y_i \to \vv y_j})_{i\neq j} \in \rr^{N ^2 - N}$ satisfying
    \eqn{
        \sum_{j \neq i} J'_{\vv y_i \to \vv y_j} (\vv y_j - \vv y_i) &= \sum_{\vv y_i \to \vv y \in G} J_{\vv y_i \to \vv y} (\vv y - \vv y_i)
            \qquad \text{ for } i = 1,2,\dots, N, \label{eq:Alg-FE} \\
        \sum_{j\neq i} J'_{\vv y_i \to \vv y_j} &= \sum_{j\neq i} J'_{\vv y_j \to \vv y_i}
            \qquad \text{ for } i =1,2,\dots, N, 
            \tag{18a}\label{eq:Alg-CBF} \\
        \addtocounter{equation}{1}
        \vv J' &\geq 0. \label{eq:Alg-Fpos} 
    }
If such a flux vector $\vv J'$ exists, then $(G, \vv J)$ is flux equivalent to a complex-balanced system. If no such flux vector $\vv J'$ exists, then $(G,\vv J)$ is not flux equivalent to a complex-balanced system. \\
    
Equation~(\ref{eq:Alg-FE}) is the flux equivalence condition, while (\ref{eq:Alg-CBF}) ensures that the new flux system is complex-balanced. Equation~(\ref{eq:Alg-FE}) alone checks for flux equivalence between any two given systems $(G,\vv J)$ and $(G', \vv J')$. \\

\begin{ex}
\label{ex:crnrevCtdFlux}
We return to the network $G$ in Figure~\ref{fig:excrnrev}(a) and Example~\ref{ex:3systems}. The network has $6$ vertices, $4$ of which are sources, and $4$ reactions. At the moment, we consider a flux system on the graph $G$ and ask, for what flux $\vv J$ is the flux system $(G,\vv J)$ equivalent to a complex-balanced one? One  can show that (\ref{eq:Alg-FE})-(\ref{eq:Alg-Fpos}) hold if and only if 
    \eqn{
    \label{eq:exFluxIneq}
        J_1=J_3,\quad  J_2=J_4,\quad \text{and } \quad   \frac{1}{5} < \frac{J_1}{J_2} < 5.
    }
A chosen flux $\vv J$ that satisfies (\ref{eq:exFluxIneq}) is flux equivalent to a complex-balanced system, whose network is a subgraph of $G'$ of Figure~\ref{fig:excrnrev}(b). 
The details of this characterization will be in an upcoming paper~\cite{STN}.
\end{ex}

\noindent
\rmk 
The setup for the detailed-balanced case is defined analogously. We keep (\ref{eq:Alg-FE}) and (\ref{eq:Alg-Fpos}) and include the equation
\eqn{
    J'_{\vv y_i \to \vv y_j} = J'_{\vv y_j \to \vv y_i}
    \qquad \text{ for } 1 \leq i \neq j \leq N. \tag{18b}
    \label{eq:Alg-DBF}
}

\subsection{Dynamical equivalence to complex balancing}

We considered above a set of equalities and inequalities necessary and sufficient for a flux system to be equivalent to a complex-balanced one. If the flux system arises from mass-action kinetics, we can write down an analogous system of equalities and inequalities necessary and sufficient for dynamical equivalence to a complex-balanced system. 
    
Consider a mass-action system $G_{\vv\kk}$, whose vertices are points in $\rr^n$, and enumerate the set of source vertices in $G$ as $\{ \vv y_1, \vv y_2,\dots, \vv y_N\}$. We set up a nonlinear feasibility problem for unknowns $\vv\kk'$ and $\vv x$. Search for vectors $\vv\kk' = (\kk'_{\vv y_i \to \vv y_j})_{i\neq j} \in \rr^{N^2 - N}$ and $\vv x \in \rr^n$ satisfying
    \eqn{
        \sum_{j\neq i} \kk'_{\vv y_i \to \vv y_j} (\vv y_j - \vv y_i)
        &= \sum_{\vv y_i \to \vv y \in G} \kk_{\vv y_i \to \vv y} (\vv y - \vv y_i)
        \qquad \text{ for } i = 1,2,\dots, N, \label{eq:Alg-DE} \\
        \sum_{j\neq i} \kk'_{\vv y_i \to \vv y_j} \vv x^{\vv y_i} 
        &= \sum_{j\neq i} \kk'_{\vv y_j \to \vv y_i} \vv x^{\vv y_j} 
         \qquad \text{ for } i =1,2,\dots, N, 
            \label{eq:Alg-CBss} \\
        \vv \kk' &\geq 0,
            \label{eq:Alg-kpos} 
        \\
        \vv x & > 0.
            \label{eq:Alg-sspos}
    }
If such $\vv \kk'$ and $\vv x$ exist, then $G_{\vv \kk}$ is dynamically equivalent to a complex-balanced system with $\vv x$ a complex-balanced steady state. If no such rate constants and steady state exist, then $G_{\vv \kk}$ is not dynamically equivalent to a complex-balanced system. \\
    
Equation~(\ref{eq:Alg-DE}) enforces dynamical equivalence. Equations~(\ref{eq:Alg-CBss}) and (\ref{eq:Alg-sspos}) imply that $\vv x$ is a positive complex-balanced steady state for an equivalent mass-action system; hence $\vv x$ is a positive steady state of $G_{\vv\kk}$. Note that in the inequality (\ref{eq:Alg-kpos}), some $\kk'_{\vv y_i\to \vv y_j}$ can be zero, which implies that $\vv y_i \to \vv y_j$ is not a reaction in the equivalent network. 

Equations~(\ref{eq:Alg-DE})-(\ref{eq:Alg-sspos}) generally form a nonlinear problem. Despite that, for networks with additional structure, one may be able to extract more information about the rate constants. One such example is the network $G$ in Figure~\ref{fig:excrnrev}(a). For this network we can completely characterize the parameter values for which  the associated mass-action system has a complex-balanced realization.

\begin{ex}
\label{ex:crnrevCtdMAS}
Consider a mass-action system on the network $G$ of Figure~\ref{fig:excrnrev}(a) and Example~\ref{ex:3systems}, with rate constants 
    \eq{
        \kk_{\vv y_1 \to \vv y_5} &= \kk_1, \quad 
        \kk_{\vv y_2 \to \vv y_5} = \kk_2, \quad
        \kk_{\vv y_3 \to \vv y_6} = \kk_3, \quad \text{and}\quad 
        \kk_{\vv y_4 \to \vv y_6} = \kk_4. 
    }
By a calculation, (\ref{eq:Alg-DE})-(\ref{eq:Alg-sspos}) hold if and only if 
    \eqn{
        \label{eq:exKIneq}
        \frac{1}{25} < \frac{k_1k_3}{k_2k_4} < 25.
    }
    
Again, a complex-balanced realization is a subgraph of $G'$ in Figure~\ref{fig:excrnrev}(b). More precisely, it is the reversible square with one pair of reversible diagonal (either $\vv y_1 \RR \vv y_3$ or $\vv y_2 \RR \vv y_4$); which diagonal is needed depends on the magnitudes of $k_1k_3$ and $k_2k_4$. The details of this characterization can be found in an upcoming paper~\cite{STN}.

The complex-balanced realization described (the subgraph of $G'$ in Figure~\ref{fig:excrnrev}(b)) has deficiency $\delta_{G'} = 1$. It is known that if its eight rate constants lie in a toric ideal of codimension $\delta_{G'} = 1$, then the mass-action system is complex-balanced~\cite{TDS}. While these eight rate constants are related to $\kk_1, \, \kk_2, \, \kk_3$, and $\kk_4$ by several linear equations, we found one explicit condition (\ref{eq:exKIneq}) for when the mass-action system $G_\vv\kk$ of Figure~\ref{fig:excrnrev}(a) is dynamically equivalent to a complex-balanced system. 

Finally, note that the network of  Example~\ref{ex:crnrevCtdMAS} gives rise to systems that are equivalent to complex-balanced for certain choices of rate constants, but {\em not} for other choices of rate constants. In a follow-up paper we will show that an entire class of networks give rise to systems that are equivalent to complex-balanced \emph{for all choice of rate constants}. More precisely, we will prove that systems generated by single-target networks that have their (unique) target vertex in the strict relative interior of the convex hull of its source vertices are dynamically equivalent to detailed-balanced mass-action systems for any choice of rate constants~\cite{STN}. 
\end{ex}

\subsection{Existence of a weakly reversible realization for a mass-action system}

While complex-balanced mass-action systems are weakly reversible, not all weakly reversible mass-action systems are complex-balanced. There has been much work on determining when a weakly reversible mass-action system is complex-balanced or not. Nonetheless, weakly reversible mass-action systems always have at least one positive steady state within each stoichiometric compatibility class~\cite{Boros_2018} and are conjectured to be persistent, and even permanent~\cite{CasianGAC1}.

We present a simple nonlinear feasibility problem to determine when a mass-action system is dynamically equivalent to a weakly reversible one. Recall that a mass-action system is weakly reversible if and only if it is complex-balanced for \emph{some} choice of rate constants. We introduce a scaling factor $\alpha_{\vv y_i \to \vv y_j}$ in order to decouple the dynamical equivalence condition from the complex-balanced condition. 

Consider a mass-action system $G_{\vv\kk}$, whose vertices are points in $\rr^n$, and enumerate the set of source vertices in $G$ as $\{\vv y_1, \vv y_2,\dots, \vv y_N\}$. We set up a nonlinear feasibility problem for unknown rate constants $\vv k'$ and a scaling factor $\vv\alpha$. Search for vectors $\vv \kk' = (\kk_{\vv y_i \to \vv y_j})_{i\neq j}$ and $\vv\alpha = (\alpha_{\vv y_i\to \vv y_j})_{i\neq j} \in \rr^{N^2-N}$ satisfying.
    \eqn{
        \sum_{j\neq i} \kk'_{\vv y_i \to \vv y_j} (\vv y_j - \vv y_i)
        &= \sum_{\vv y_i \to \vv y \in G} \kk_{\vv y_i \to \vv y} (\vv y - \vv y_i) 
        \qquad \text{ for } i = 1,2,\dots, N, \label{eq:Alg-DE2} \\
        \sum_{j\neq i} \alpha_{\vv y_i \to \vv y_j} \kk'_{\vv y_i \to \vv y_j} 
        &= \sum_{j\neq i} \alpha_{\vv y_j \to \vv y_i} \kk'_{\vv y_j \to \vv y_i}
         \qquad \text{ for } i =1,2,\dots, N, 
            \label{eq:Alg-CBss2} \\
        \vv \kk' &\geq 0,
            \label{eq:Alg-kpos2} 
        \\
        \vv \alpha &> 0.
            \label{eq:Alg-scalepos2} 
    }
If such $\vv\kk'$ and $\vv\alpha$ exist, then $G_{\vv\kk}$ is dynamically equivalent to a weakly reversible mass-action system. If no solution exists, then $G_\vv\kk$ is {\em not} dynamically equivalent to a weakly reversible system.

Equation~(\ref{eq:Alg-DE2}) enforces dynamical equivalence. Equation~(\ref{eq:Alg-CBss2}) can be regarded as a complex balancing condition that uses a different set of rate constants $\alpha_{\vv y_i \to \vv y_j} \kk'_{\vv y_i \to \vv y_j}$. Since $\alpha_{\vv y_i \to \vv y_j} \kk'_{\vv y_i \to \vv y_j} \neq 0$ if and only if $\kk'_{\vv y_i\to \vv y_j} \neq 0$, we preserve the graph structure of $G'_{\vv\kk'}$. It is well-known that a reaction network is weakly reversible if and only if it is complex-balanced for some choice of rate constants~\cite{TDS}. The scaling factor $\vv \alpha$ frees the rate constants from the dynamical equivalence constraint.

Note that while (\ref{eq:Alg-DE2})-(\ref{eq:Alg-scalepos2}) are simple to describe, more sophisticated, computationally efficient methods have been developed~\cite{PolylTimeAlgWR, WRAlgDense}. Weak reversibility is a condition of the underlying directed graph. Ultimately one is imposing conditions on the incidence matrix or the Kirchhoff matrix of the network. Algorithms to find weakly reversible realization for a fixed vertex set have been proposed initially using mixed-integer linear programming~\cite{WRAlgDense, WRAlgSparse} and later by a {\it polynomial time} algorithm based on linear programming~\cite{PolylTimeAlgWR}. However, as with previous work on complex-balanced realizations, one must fix the set of vertices to be used in the computation. According to Theorem~\ref{thm:NoNewNodeWR}, it suffices to find an equivalent network using the existing source vertices. Therefore, the mixed-integer linear programming algorithms proposed in \cite{WRAlgDense, WRAlgSparse} and the polynomial time algorithm in \cite{PolylTimeAlgWR} can be used in conjunction with Theorem~\ref{thm:NoNewNodeWR} to completely characterize whether or not a mass-action system $G_\vv\kk$ is dynamically equivalent to a weakly reversible one.

\section{Conclusion}

If we are looking for a complex-balanced realization of a given polynomial (or power-law) dynamical system, there exists no a priori limit on the number of vertices in the objective network. Moreover, there are no a priori choices for the locations of the vertices. Here we prove that a solution exists if and only if the objective network can be constructed by using only the vertices that are already present in the original system (i.e., the exponents of the monomial terms present in the original system). We also prove that the same is true for detailed-balanced, reversible and weakly reversible systems.

\section{Acknowledgment} 

This work was partially supported by NSF grants DMS-1412643 and DMS-1816238. The work of the third author was partially supported by an NSERC PGS-D award.

\bibliographystyle{siam}
\bibliography{cit}

\end{document}